\author{\Large{Damanvir Singh Binner \footnote{The author acknowledges the support of IISER Mohali for providing research facilities and fellowship.}
}}
\begin{document}
\theoremstyle{plain} 
\newtheorem{theorem}{Theorem}
 \newtheorem{corollary}[theorem]{Corollary} 
 \newtheorem{lemma}[theorem]{Lemma} 
 \newtheorem{proposition}[theorem]{Proposition}
 
\theoremstyle{definition}
\newtheorem{definition}[theorem]{Definition}
\newtheorem{example}[theorem]{Example}
\newtheorem{conjecture}[theorem]{Conjecture}
\theoremstyle{remark}
\newtheorem{remark}[theorem]{Remark}

\title{\Large{Proofs of Chappelon and Alfons\'{\i}n Conjectures On Square Frobenius Numbers and its Relationship to Simultaneous Pell's Equations}}
\date{}
\maketitle
\begin{center}
\vspace*{-8mm}
\large{Department of Mathematics \\
Indian Institute of Science Education and Research (IISER) \\
Mohali, Punjab, India \\
damanvirsinghbinner@gmail.com}
\end{center}
\begin{abstract}
Recently, Chappelon and Alfons\'{\i}n defined the \emph{square Frobenius number} of coprime numbers $m$ and $n$ to be the largest perfect square that cannot be expressed in the form $mx+ny$ for nonnegative integers $x$ and $y$. When $m$ and $n$ differ by $1$ or $2$, they found simple expressions for the square Frobenius number if neither $m$ nor $n$ is a perfect square. If either $m$ or $n$ is a perfect square, they formulated some interesting conjectures which have an unexpected close connection with a known recursive sequence, related to the denominators of Farey fraction approximations to $\sqrt{2}$. In this note, we prove these conjectures. Our methods involve solving Pell's equations $x^2-2y^2=1$ and $x^2-2y^2=-1$. Finally, to complete our proofs of these conjectures, we eliminate several cases using a bunch of results related to solutions of simultaneous Pell's equations. 
\end{abstract}
 
\section{Introduction}
\label{Intro}

Let $m$ and $n$ be given coprime natural numbers. Then the Frobenius number of $m$ and $n$ is defined to be the largest number that cannot be expressed in the form $mx+ny$ for nonnegative integers $x$ and $y$. In $1884$, Sylvester proved that the Frobenius number for $m$ and $n$ is given by $mn-m-n$. In recent times, finding Frobenius numbers for more than two variables has been a very active area of research (see \cite{alfonsin}).

Recently, Chappelon and Alfons\'{\i}n \cite{MainPaper} defined the square Frobenius number $r_2(m,n)$ of coprime numbers $m$ and $n$ to be the largest perfect square that cannot be expressed in the form $mx+ny$ for nonnegative integers $x$ and $y$. They conducted an extensive study of the square Frobenius number if $m$ and $n$ differ by at most $5$. We describe some of their main results below. When $m$ and $n$ differ by at most $2$, the formulae for the square Frobenius number are simpler if neither $m$ nor $n$ is a perfect square, as described below.

\begin{theorem}[Chappelon and Alfons\'{\i}n (2020)]
Let $a$ be a positive integer such that $b^2 < a < a+1 < (b+1)^2$ for some integer $b \geq 1$. Then, $$ r_2(a,a+1) = (a-b)^2. $$
\end{theorem}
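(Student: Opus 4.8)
The plan is to reduce everything to the elementary description of which nonnegative integers are representable as $ax+(a+1)y$ with $x,y\ge 0$. Since $a+1\equiv 1\pmod a$, any such equation forces $y\equiv N\pmod a$, so the only candidate with $0\le y<a$ is $y=N\bmod a$; taking that value gives the criterion: $N$ is representable if and only if $(a+1)(N\bmod a)\le N$. I will apply this with $N=k^2$, using the hypotheses in the forms $0<b^2<a$ and $a\le b^2+2b-1$ (the latter because $a+1<(b+1)^2$ for integers $a$).

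First I would show $(a-b)^2$ is not representable. Since $b\ge 1$ and $b^2<a$, we have $a-b\ge 1$ and $(a-b)^2\equiv b^2\pmod a$ with $0\le b^2<a$, so $(a-b)^2\bmod a=b^2$. A direct computation gives $(a-b)^2-(a+1)b^2=a\bigl(a-b^2-2b\bigr)$, which is strictly negative precisely when $a<b^2+2b$; the hypothesis $a\le b^2+2b-1$ guarantees this, so $(a-b)^2<(a+1)b^2$ and $(a-b)^2$ is not representable. This is sharp: if $a=b^2+2b$ then $(a-b)^2=(a+1)b^2$ is representable, which is exactly why the strict bound $a+1<(b+1)^2$ is needed.

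Next I would show every perfect square exceeding $(a-b)^2$ is representable; since $a-b\ge 1$ it suffices to treat $k^2$ for integers $k>a-b$. If $k\ge a$, then $k^2\ge a^2>a^2-a-1$, and $a^2-a-1=a(a+1)-a-(a+1)$ is the Frobenius number of $a$ and $a+1$ by Sylvester's theorem, so $k^2$ is representable. If $a-b<k<a$, write $k=a-j$ with $1\le j\le b-1$; then $0\le j^2<b^2<a$, so $(a-j)^2\bmod a=j^2$, and $(a-j)^2-(a+1)j^2=a\bigl(a-j^2-2j\bigr)$ is positive because $j+1\le b$ gives $a>b^2\ge(j+1)^2=j^2+2j+1>j^2+2j$. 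Hence $k^2$ is representable. Combined with the previous step, this shows $(a-b)^2$ is the largest perfect square that is not representable, i.e.\ $r_2(a,a+1)=(a-b)^2$.

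There is no genuine obstacle here; the argument is a short exercise in modular arithmetic. The only point needing care is boundary bookkeeping — using the hypothesis in its strict form $a\le b^2+2b-1$ rather than $a\le b^2+2b$ to obtain \emph{strict} non-representability in the first step, and using $j+1\le b$ together with $b^2<a$ to control the residue in the second. It is also worth noting at the outset that the hypotheses place both $a$ and $a+1$ strictly between the consecutive squares $b^2$ and $(b+1)^2$, so neither input is a perfect square; this is precisely the tractable regime, whereas the cases in which an input is a square are the ones that require the Pell-equation machinery developed later in the paper.
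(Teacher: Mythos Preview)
Your proof is correct. Note, however, that the paper does not itself prove this theorem: it is quoted as a result of Chappelon and Alfons\'{\i}n from the cited preprint, so there is no proof in the paper to compare against directly. That said, your argument is exactly in line with the machinery the paper develops for its own results. Your representability criterion $(a+1)(N\bmod a)\le N$ is equivalent to Lemma~\ref{Cri1} (writing $N=aq+r$ with $0\le r<a$, both conditions reduce to $q\ge r$), and your search for the smallest $j$ making $(a-j)^2$ non-representable is precisely the content of Corollary~\ref{Cri3}. In that language the argument is one line: for $1\le j\le b-1$ one has $\lambda_j=\lfloor j^2/a\rfloor=0$ and $(j+1)^2\le b^2<a+1$, so condition~\eqref{Eqn1} fails; at $j=b$ one still has $\lambda_b=0$ but now $(b+1)^2>a+1$ by hypothesis, so $j_0=b$ and $r_2(a,a+1)=(a-b)^2$.
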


\begin{theorem}[Chappelon and Alfons\'{\i}n (2020)]
Let $a \geq 3$ be an odd integer such that $(2b+1)^2 < a < a+2 < (2b+3)^2$ for some integer $b \geq 1$. Then, $$ r_2(a,a+2) = (a-(2b+1))^2. $$
\end{theorem}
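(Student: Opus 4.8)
The plan is to argue directly from the two-variable representability criterion, specialised to the pair $(a,a+2)$. Writing a target value as $N=ax+(a+2)y=a(x+y)+2y$ and putting $t=x+y$, one checks that a nonnegative integer $N$ is expressible as $ax+(a+2)y$ with $x,y\ge 0$ if and only if there is an integer $t$ with $N/(a+2)\le t\le N/a$ and $t\equiv N\pmod 2$, the congruence arising because $a$ is odd, so that $N-at=2y$ must be a nonnegative even integer. Set $c:=2b+1$, so that $c$ is odd with $c\ge 3$, $a$ is odd, and $c^2<a<a+2<(c+2)^2$. I would then establish two facts: (i) $N_0:=(a-c)^2$ is not expressible, and (ii) every perfect square strictly larger than $N_0$ is expressible; together these give $r_2(a,a+2)=(a-c)^2=(a-(2b+1))^2$.

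For (i): since $0<c^2<a$ we have $N_0/a=a-2c+c^2/a\in(a-2c,\,a-2c+1)$, while $(c+2)^2>a+2$ gives $N_0/(a+2)=(a-2c-2)+(c+2)^2/(a+2)>a-2c-1$. Hence the interval $[N_0/(a+2),\,N_0/a]$ lies inside the open interval $(a-2c-1,\,a-2c+1)$, whose only integer is $a-2c$. But $a-2c$ is odd, whereas $N_0=(a-c)^2$ is even (as $a-c$ is a difference of two odd numbers), so no admissible $t$ exists and $N_0$ is not expressible. (It also sits below the Frobenius number $a^2-2$ of the pair, since $(a-c)^2\le(a-1)^2<a^2-2$, so the claim is at least consistent.)

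For (ii): let $s$ be a positive integer with $s>a-c$. If $s\ge a+1$, the interval $[s^2/(a+2),\,s^2/a]$ has length $2s^2/\big(a(a+2)\big)\ge 2(a+1)^2/\big(a(a+2)\big)>2$, so it contains two consecutive integers, hence one of each parity, hence one that is $\equiv s^2\pmod 2$; thus $s^2$ is expressible. If instead $a-c<s\le a$, put $j:=a-s$, so $0\le j\le c-1$ and $s=a-j$ has parity opposite to that of $j$; from $j^2\le(c-1)^2<a$ we get $s^2/a\in[a-2j,\,a-2j+1)$, and $s^2/(a+2)=(a-2j-2)+(j+2)^2/(a+2)$. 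If $j$ is even, then $(j+2)^2\le(c+1)^2<2(a+2)$ (this inequality follows from $a>c^2$), so $s^2/(a+2)<a-2j\le s^2/a$ and the integer $a-2j$, which is odd, lies in the interval, the correct parity since $s$ is odd. If $j$ is odd, then $j\le c-2$, so $(j+2)^2\le c^2<a+2$, hence $s^2/(a+2)<a-2j-1<s^2/a$ and the integer $a-2j-1$, which is even, lies in the interval, the correct parity since $s$ is even. Either way $s^2$ is expressible, completing (ii). The only delicate point is this last range $a-c<s\le a$: here the interval has length below $2$, so the easy counting argument fails and one must produce the witness $t$ explicitly; the inequalities that make this work, namely $(j+2)^2<2(a+2)$ for even $j\le c-1$ and $(j+2)^2<a+2$ for odd $j\le c-2$, hold with room to spare exactly because the hypothesis forces $a>(2b+1)^2$.
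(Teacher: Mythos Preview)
Your proof is correct. The paper does not itself prove this statement---it is quoted as a prior result of Chappelon and Alfons\'{\i}n---so there is no in-paper argument to compare against line by line. That said, your approach is exactly what the paper's own machinery would produce: your representability criterion (an integer $t$ with $N/(a+2)\le t\le N/a$ and $t\equiv N\pmod 2$) is equivalent to Lemma~\ref{Cri2}, and your parity split on $j=a-s$ in the range $0\le j\le c-1$ is precisely the verification, in the language of Corollary~\ref{Cri4}, that $j_1=c=2b+1$: for $j=c$ one has $\lambda_j=0$, $j^2\bmod a=c^2$ odd, and $(c+2)^2>a+2$, while for smaller $j$ one has $\lambda_j=0$ and either $(j+2)^2\le c^2<a+2$ (odd $j\le c-2$) or $(j+2)^2\le(c+1)^2<2(a+2)$ (even $j\le c-1$). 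The only cosmetic difference is that you dispose of the range $s\ge a+1$ by the interval-length argument, whereas the paper's framework would simply cite the Frobenius number $a^2-2$ to ignore those squares.
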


The case when the numbers differ by at most $2$ and one of them is a perfect square is much more complicated. In this case, based on computer experiments, they conjectured some surprising values for the square Frobenius number. Firstly, we recall the recursive sequence $u_n$ defined in \cite{MainPaper}.  Let $u_n$ be the recursive sequence defined by $u_1 = 1$, $u_2 = 2$, $u_3 = 3$, 
\begin{equation}
\label{Recur}
\begin{aligned}
u_{2n} = u_{2n-1} + u_{2n-2} \\
u_{2n+1} = u_{2n} + u_{2n-2}
\end{aligned}
\end{equation}
 for all $n \geq 2$. The first few values of $u_n$ are 
 \begin{equation}
 \label{Values}
 1,2,3,5,7, 12, 17, 29, 41, 70, 99, 169, 239, 408, 577, 985, \ldots 
\end{equation}
 As described in \cite{MainPaper}, this sequence corresponds to the denominators of Farey fraction approximations to $\sqrt{2}$. With this definition, we describe their main conjectures.

\begin{conjecture}[Chappelon and Alfons\'{\i}n (2020)]
\label{One}
Suppose $a=b^2$ for some integer $b \geq 1$, then 
$$ 
r_2(a,a+1) = 
\begin{cases}
(a-\lfloor b\sqrt{2} \rfloor)^2, & \text{if } b \not\in \bigcup_{n \geq 0} \{u_{4n+1}, u_{4n+2}\} , \\
(a-\lfloor b\sqrt{3} \rfloor)^2, & \text{if } b \in \bigcup_{n \geq 0} \{u_{4n+1}, u_{4n+2}\}.
\end{cases}
$$
Suppose $a+1=b^2$ for some integer $b \geq 2$, then 
$$ 
r_2(a,a+1) = 
\begin{cases}
(a-\lfloor b\sqrt{2} \rfloor)^2, & \text{if } b \not\in \bigcup_{n \geq 1} \{u_{4n-1}, u_{4n}\} , \\
(a-\lfloor b\sqrt{3} \rfloor)^2, & \text{if } b \in \bigcup_{n \geq 1} \{u_{4n}, u_{4n+3}\}, \\
2^2, & \text{if } b=u_3=3.
\end{cases}
$$

\end{conjecture}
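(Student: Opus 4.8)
The plan is to reduce Conjecture~\ref{One} to an explicit arithmetic inequality, and then to a short list of Pell and simultaneous-Pell problems.

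\medskip
\noindent\textbf{Step 1: a non-representability criterion.} Recall that for coprime $a$ and $a+1$, a positive integer $N$ fails to be of the form $ax+(a+1)y$ with $x,y\ge 0$ if and only if $\lfloor N/a\rfloor<\lceil N/(a+1)\rceil$; writing $N=aq+r$ with $0\le r<a$, this is just the condition $r>q$. I would apply this with $a=b^2$ and $N=(b^2-c)^2$ for $1\le c\le b^2-1$. Since $(b^2-c)^2\equiv c^2\pmod{b^2}$, one finds $q=b^2-2c+\lfloor c^2/b^2\rfloor$ and $r=c^2-b^2\lfloor c^2/b^2\rfloor$, so that $(b^2-c)^2$ is \emph{not} representable exactly when
\begin{equation}\label{crit}
(c+1)^2>(b^2+1)\bigl(\lfloor c^2/b^2\rfloor+1\bigr).
\end{equation}
The identical computation for $a=b^2-1$, $a+1=b^2$ (now using $(b^2-c)^2\equiv(c-1)^2\pmod{b^2-1}$) yields the criterion $c^2>b^2\bigl(\lfloor(c-1)^2/(b^2-1)\rfloor+1\bigr)$. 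Because the ordinary Frobenius number of $\{a,a+1\}$ lies below $a^2$, every perfect square $\ge a^2$ is representable; hence in each case $r_2(a,a+1)=(b^2-c_0)^2$, where $c_0$ is the least $c\ge 1$ satisfying the relevant criterion.

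\medskip
\noindent\textbf{Step 2: the band structure.} Partition the admissible $c$ into the blocks $B_j=\{c:\lfloor c^2/b^2\rfloor=j\}=\{c:\lceil b\sqrt{j}\rceil\le c\le\lfloor b\sqrt{j+1}\rfloor\}$, $j\ge 0$. On $B_j$ the right-hand side of \eqref{crit} is the constant $(j+1)(b^2+1)$ while the left-hand side increases; and if $c\in B_j$ is not the top element $\lfloor b\sqrt{j+1}\rfloor$ of $B_j$, then $(c+1)^2\le\lfloor b\sqrt{j+1}\rfloor^2<(j+1)b^2<(j+1)(b^2+1)$. Hence \eqref{crit} can hold on $B_j$ only at its top element; $B_0$ never works (so $c_0\ge b$); and $c_0$ is the top of the first $B_j$ with $j\ge 1$ that works. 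In particular one only ever tests $c=\lfloor b\sqrt 2\rfloor$, where the criterion reads $(\lfloor b\sqrt 2\rfloor+1)^2>2(b^2+1)$, and, failing that, $c=\lfloor b\sqrt 3\rfloor$, where it reads $(\lfloor b\sqrt 3\rfloor+1)^2>3(b^2+1)$. The same discussion, with the blocks indexed by $\lfloor(c-1)^2/(b^2-1)\rfloor$, handles $a=b^2-1$; there the tops of the first two blocks are the largest $c$ with $(c-1)^2<2(b^2-1)$ and with $(c-1)^2<3(b^2-1)$, which equal $\lfloor b\sqrt 2\rfloor+1$ and $\lfloor b\sqrt 3\rfloor+1$ except in the degenerate cases where $2(b^2-1)$ or $3(b^2-1)$ is itself a perfect square.

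\medskip
\noindent\textbf{Step 3: the Pell dichotomy.} Since $\lfloor b\sqrt 2\rfloor+1>b\sqrt 2$ one always has $(\lfloor b\sqrt 2\rfloor+1)^2\ge 2b^2+1$, so the ``$\sqrt 2$-block'' fails precisely when $(\lfloor b\sqrt 2\rfloor+1)^2\in\{2b^2+1,\ 2b^2+2\}$: the first case is the Pell equation $x^2-2b^2=1$, and the second, after removing a factor of $2$, the negative Pell equation $b^2-2v^2=-1$. Using the standard recursions for the solutions of $x^2-2y^2=1$ and $x^2-2y^2=-1$ and comparing with \eqref{Recur}--\eqref{Values}, the set of bad $b$ in the case $a=b^2$ comes out to be exactly $\bigcup_{n\ge 0}\{u_{4n+1},u_{4n+2}\}$; for all other $b$ the $\sqrt 2$-block works, giving $r_2(b^2,b^2+1)=(a-\lfloor b\sqrt 2\rfloor)^2$, the first line of Conjecture~\ref{One}. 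In the case $a=b^2-1$ the degeneracies are governed instead by $b^2-2v^2=1$ (when $2(b^2-1)$ is a square) and by $x^2-2b^2=-1$ (when $2b^2-1$ is a square), and the parallel comparison with \eqref{Recur}--\eqref{Values} identifies the bad set with $\bigcup_{n\ge 1}\{u_{4n-1},u_{4n}\}$, giving $r_2(b^2-1,b^2)=(a-\lfloor b\sqrt 2\rfloor)^2$ for the remaining $b$.

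\medskip
\noindent\textbf{Step 4: the $\sqrt 3$-block, and the main obstacle.} For the bad $b$ one must show the $\sqrt 3$-block works, i.e.\ $(\lfloor b\sqrt 3\rfloor+1)^2>3(b^2+1)$ (and, when $a=b^2-1$, that $\lfloor b\sqrt 3\rfloor+1$ is indeed the corresponding block top). Since $(\lfloor b\sqrt 3\rfloor+1)^2\ge 3b^2+1$, failure would force $(\lfloor b\sqrt 3\rfloor+1)^2-3b^2\in\{1,2,3\}$; the value $2$ is impossible modulo $3$, and the value $3$ forces $3\mid(\lfloor b\sqrt 3\rfloor+1)$ and then $b^2\equiv-1\pmod 3$, again impossible, so the only genuine obstruction is the Pell equation $x^2-3b^2=1$. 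The crux of the whole argument -- and the step I expect to be hardest -- is to prove that no bad $b$ from Step~3 also solves $x^2-3b^2=1$ (together with the analogous incompatibilities needed when $a=b^2-1$, e.g.\ with $b^2-3s^2=1$ and with $d^2-3b^2=-2$). Each such potential coincidence, after eliminating one variable, becomes a pair of simultaneous Pell equations sharing a variable -- for example $b^2-2v^2=-1$ combined with $x^2-3b^2=1$ reduces to the system $3v^2-2w^2=1$, $b^2-2v^2=-1$ -- and these finitely many systems are then shown to have no admissible solution using known results on simultaneous Pell equations (the Baker--Davenport method and the effective bounds available in the literature). Dealing with these systems, together with the small exceptional cases $b=1$ in the first half (where $r_2(1,2)$ is degenerate) and $b=u_3=3$ in the second half (where the $\sqrt 2$-, $\sqrt 3$- and even the next block all fail, and a direct computation gives $r_2(8,9)=2^2$), completes the proof.
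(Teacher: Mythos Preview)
Your proposal is correct and follows essentially the same route as the paper. Your Step~1 criterion is the paper's Lemma~\ref{Cri1}/Corollary~\ref{Cri3}; your band analysis in Step~2 is the paper's case split into ``$j$ strictly between consecutive $\lfloor\sqrt{la}\rfloor$'' versus ``$j=\lfloor\sqrt{la}\rfloor$''; your Step~3 Pell identification is exactly the paper's use of Lemmas~\ref{P1}--\ref{P2}; and your Step~4 reduction to simultaneous Pell systems is the paper's Theorem~\ref{Hardest} and Lemmas~\ref{L1}--\ref{L2}, for which the paper quotes specific classical results (Gloden; Fermat's four squares in arithmetic progression; Boutin--Teilhet; Katayama--Levesque--Nakahara; Anglin; Rickert's bound) rather than invoking Baker--Davenport generically. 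One small slip to tighten: in Step~2 for $a=b^2-1$ you describe the degenerate cases for the $\sqrt{2}$-block as ``$2(b^2-1)$ a square'', but (as you correctly record in Step~3) the top of that block also shifts when $2b^2-1$ is a square, since an integer then lies in $[\sqrt{2b^2-2},\,b\sqrt{2})$; the paper handles both possibilities explicitly in its Case~3.
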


\begin{conjecture}[Chappelon and Alfons\'{\i}n (2020)]
\label{Two}
Suppose $a=(2b+1)^2$ for some integer $b \geq 1$, then 
$$ 
r_2(a,a+2) = 
\begin{cases}
\left(a- 2 \left \lfloor \frac{(2b+1)\sqrt{2}}{2} \right \rfloor \right)^2, & \text{if } (2b+1) \not\in \bigcup_{n \geq 1} \{u_{4n+1}\} , \\
\left(a- \left \lfloor (2b+1)\sqrt{3} \right \rfloor \right)^2, & \text{if } (2b+1) \in \bigcup_{n \geq 2} \{u_{4n+1}\}, \\
38^2, & \text{if } 2b+1 = u_5 = 7.
\end{cases}
$$
Suppose $a+2=(2b+1)^2$ for some integer $b \geq 1$, then 
$$ 
r_2(a,a+2) = 
\begin{cases}
\left(a- 2 \left\lfloor \frac{(2b+1)\sqrt{2}}{2} \right\rfloor \right)^2, & \text{if } (2b+1) \not\in \bigcup_{n \geq 0} \{u_{4n+3}\} , \\
\left(a-\left\lfloor (2b+1)\sqrt{3} \right\rfloor \right)^2, & \text{if } (2b+1) \in \bigcup_{n \geq 0} \{u_{4n+3}\}. \\
\end{cases}
$$

\end{conjecture}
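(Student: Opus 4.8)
The plan is to reduce the problem to an elementary statement about floor functions and then to the Diophantine analysis of the Pell equations $x^{2}-2y^{2}=\pm1$, whose solution sets turn out to be exactly the subsequences of $(u_{n})$ appearing in \eqref{Values}. Writing $ax+(a+2)y=a(x+y)+2y$ and setting $p=x+y$, $q=y$ (so $0\le q\le p$ and $ap+2q=N$), and using $\gcd(a,2)=1$, a nonnegative integer $N$ is representable by $a$ and $a+2$ exactly when there is an integer $p$ with $N/(a+2)\le p\le N/a$ and $p\equiv N\pmod{2}$. I would apply this with $N=(a-j)^{2}$ via the exact identities
$$(a-j)^{2}=a\,(a-2j)+j^{2}=(a+2)(a-2j-2)+(j+2)^{2},$$
which show that the candidate $p=a-2j+r$ lies in the required range exactly when $j^{2}\ge ra$ and $(j+2)^{2}\le(r+2)(a+2)$ (here $r\ge-1$ is an integer of the parity prescribed by $p\equiv(a-j)^{2}\pmod{2}$). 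Since every square $(a+c)^{2}$ with $c\ge0$ already exceeds the Frobenius number $a(a+2)-2a-2=a^{2}-2$, the square Frobenius number equals $(a-j_{0})^{2}$, where $j_{0}\ge1$ is least with no admissible $p$; the same reduction, with $a+1$ in place of $a+2$, also underlies the two theorems of Chappelon and Alfons\'{\i}n and Conjecture~\ref{One}.

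Next I would locate $j_{0}$ when $a=(2b+1)^{2}$ (and dually when $a+2=(2b+1)^{2}$). Sorting the ranges above shows that for small $j$ the square is always representable and that the first failure lies in a ``gap zone'' of length slightly below $2$ whose right endpoint is $\sqrt{2a}=(2b+1)\sqrt2$. One checks that the even candidate $j_{0}=2\lfloor(2b+1)\sqrt2/2\rfloor$ is correct — this is the first line of Conjecture~\ref{Two} — unless $2a+2=2\bigl((2b+1)^{2}+1\bigr)$ is a perfect square, i.e. unless $(2b+1)^{2}-2w^{2}=-1$, i.e. (by the continued-fraction expansion of $\sqrt2$, whose convergents are the $u_{n}$) unless $2b+1\in\bigcup_{n\ge1}\{u_{4n+1}\}$. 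In that case the $\sqrt2$-candidate is itself representable and $j_{0}$ slides into the next nonempty gap zone, the one whose right endpoint is $\sqrt{3a}=(2b+1)\sqrt3$ (the zone at $\sqrt{4a}$ is empty, $\sqrt4$ being rational, so a second failure would jump all the way to $\sqrt5$); this produces the $\sqrt3$-value of Conjecture~\ref{Two}, again provided no further coincidence occurs. The elementary estimates $\sqrt{2(a+2)}-\sqrt{2a}<1$ and $\sqrt{3(a+2)}-\sqrt{3a}<1$, with the irrationality of $\sqrt2,\sqrt3$, make all these reductions exact, and the four families $u_{4n},u_{4n+1},u_{4n+2},u_{4n+3}$ are precisely the four solution families of $x^{2}-2y^{2}=\pm1$ and their companion equations, which is why \eqref{Recur} governs the answer.

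The hard part — and the only genuinely nontrivial step — is to exclude the possibility that a $\sqrt2$-coincidence is accompanied by a $\sqrt3$-coincidence, which would push $j_{0}$ to the $\sqrt5$-zone and contradict the conjecture. Each such double coincidence amounts to a pair of Pell equations in the single unknown $2b+1$, schematically $X^{2}-2Y^{2}=-1$ together with $Z^{2}-3Y^{2}=c$ for a small fixed $c$, and I would dispose of these using the known theory of simultaneous Pell equations (effective linear-forms-in-logarithms bounds of Baker--Davenport type and the explicit resolutions of such systems in the literature cited in this note). This leaves only a short finite list of small solutions, which are exactly the exceptional entries recorded in the conjectures: $2b+1=u_{5}=7$, giving $r_{2}=38^{2}$, for the case $a=(2b+1)^{2}$ of Conjecture~\ref{Two}, and $b=u_{3}=3$, giving $r_{2}=2^{2}$, in the corresponding slot of Conjecture~\ref{One}; for $a+2=(2b+1)^{2}$ the analogous system has no nontrivial solution, so that half of Conjecture~\ref{Two} carries no exceptional case.

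Finally I would assemble everything: for each displayed formula, verify directly from the interval criterion that the claimed $(a-j_{0})^{2}$ is not representable and that $(a-j)^{2}$ is representable for all $1\le j<j_{0}$, the latter resting on the gap-zone bookkeeping of the second paragraph and the simultaneous-Pell exclusions of the third. The small cases $2b+1\in\{3,5,7\}$ (and $b\in\{1,2,3\}$ for Conjecture~\ref{One}) are done by direct computation, as are the divisibility corrections that occur only when $(2b+1)\mid j$ or when one of the $2a\pm2$-type quantities is a perfect square. Apart from the simultaneous-Pell input, I expect all of this to be a long but essentially routine unwinding of the interval criterion.
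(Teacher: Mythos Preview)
Your approach is essentially the same as the paper's --- the representability criterion via an interval of admissible $p$, the identification of $j_0$ with the first gap zone near $\sqrt{2a}$, the Pell characterisation of when that zone degenerates, and the appeal to simultaneous-Pell results to rule out a further collapse at $\sqrt{3a}$. However, the statement you are trying to prove is \emph{false}, and the paper says so explicitly: the first part of Conjecture~\ref{Two} fails at $2b+1=u_{17}=1393$ (so $a=1393^2=1940449>10^6$, which is why the computer check missed it), and the paper replaces Conjecture~\ref{Two} by the corrected Theorem~\ref{Three}.

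The gap in your sketch is precisely the step you waved through: ``this produces the $\sqrt3$-value of Conjecture~\ref{Two}''. You correctly observed in the $\sqrt2$-zone that the parity constraint $p\equiv (a-j)^2\pmod 2$ forces $j_0$ to be even, giving $j_0=2\lfloor (2b+1)\sqrt2/2\rfloor$. But you did not repeat that parity bookkeeping in the $\sqrt3$-zone. There (with $a$ odd and $\lambda_j=2$) the favourable branch of the criterion requires $j^2-2a$ to be odd, hence $j$ odd; the paper's case analysis (Case~5 of the proof of Theorem~\ref{Three}) shows that $j_1=\lfloor\sqrt{3a}\rfloor$ when that floor is odd and $j_1=\lfloor\sqrt{3a}\rfloor-1$ when it is even, i.e.\ uniformly
\[
j_1=2\left\lfloor\frac{(2b+1)\sqrt3-1}{2}\right\rfloor+1,
\]
not $\lfloor(2b+1)\sqrt3\rfloor$. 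For $2b+1=7$ one has $\lfloor 7\sqrt3\rfloor=12$ (even), so the correct $j_1$ is $11$ and $r_2=(49-11)^2=38^2$; this is why $38^2$ appears in the conjecture, but it is not an isolated Pell coincidence as you suggest --- it is the first of infinitely many $u_{4n+1}$ for which $\lfloor u_{4n+1}\sqrt3\rfloor$ is even (the next being $u_{17}=1393$, with $\lfloor1393\sqrt3\rfloor=2412$). The same parity correction applies to the second half of the conjecture. So your simultaneous-Pell step is doing the right job (excluding a jump to the $\sqrt5$-zone), but your conclusion in the $\sqrt3$-zone is off by one whenever the floor has the wrong parity, and that is exactly the content of the paper's correction.
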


The authors verified Conjectures \ref{One} and \ref{Two} by computer for all values of $a$ up to $10^6$. However, it turns out that Conjecture \ref{Two} starts becoming incorrect for some values beyond $10^6$. In particular, the first part of Conjecture \ref{Two} is incorrect when $2b+1 = u_{17} = 1393$, that is $a = 1393^2 = 1940449$. We correct the conjecture as follows.

\begin{theorem}
\label{Three}
Suppose $a=(2b+1)^2$ for some integer $b \geq 1$, then 
$$ 
r_2(a,a+2) = 
\begin{cases}
\left(a- 2 \left \lfloor \frac{(2b+1)\sqrt{2}}{2} \right \rfloor \right)^2, & \text{if } (2b+1) \not\in \bigcup_{n \geq 1} \{u_{4n+1}\} , \\
\left(a- 2\left \lfloor \frac{(2b+1)\sqrt{3}-1}{2} \right \rfloor - 1 \right)^2, & \text{if } (2b+1) \in \bigcup_{n \geq 1} \{u_{4n+1}\}, \\
\end{cases}
$$
Suppose $a+2=(2b+1)^2$ for some integer $b \geq 1$, then 
$$ 
r_2(a,a+2) = 
\begin{cases}
\left(a- 2 \left\lfloor \frac{(2b+1)\sqrt{2}}{2} \right\rfloor \right)^2, & \text{if } (2b+1) \not\in \bigcup_{n \geq 0} \{u_{4n+3}\} , \\
\left(a- 2 \left\lfloor \frac{(2b+1)\sqrt{3}-1}{2} \right\rfloor -1 \right)^2, & \text{if } (2b+1) \in \bigcup_{n \geq 0} \{u_{4n+3}\}. \\
\end{cases}
$$
\end{theorem}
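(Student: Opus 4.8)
The plan is to reduce representability of a perfect square to an elementary statement about a short interval, and then to analyse that statement through the continued-fraction / Pell structure of $\sqrt 2$ and $\sqrt 3$. Put $c=2b+1$, so that either $a=c^2$ or $a+2=c^2$; I would treat the two sub-cases in parallel. Writing $s^2=ax+(a+2)y=a(x+y)+2y$ and using that $a$ is odd, one checks that $s^2$ is representable by $m=a,\ n=a+2$ if and only if there is an integer $k$ with $k\equiv s\pmod 2$ and $\frac{s^2}{a+2}\le k\le \frac{s^2}{a}$; equivalently, $s^2$ is a \emph{non}-representable square precisely when the interval $\bigl[\frac{s^2}{a+2},\frac{s^2}{a}\bigr]$ contains no integer of the parity of $s$. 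Since $c\mid s$ forces $s^2$ to be trivially representable (take $y=0$ or $x=0$), for a non-representable square neither endpoint of this interval is an integer, so we may assume $c\nmid s$.

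Next I would localise this interval. Writing $s=c^2-t$ with $t\ge 0$ and expanding $\frac{s^2}{a}$ and $\frac{s^2}{a+2}$ shows the interval equals $\bigl[\,c^2-2t+\alpha,\ c^2-2t+\beta\,\bigr]$ for explicit rationals $\alpha,\beta$ whose difference is $\frac{2s^2}{a(a+2)}$, which lies just below $2$ exactly in the regime where $t$ has order $c$ (the regime of the square Frobenius number). A direct computation then determines, for each residue of $t$ modulo $2$ and each $t$, which integers lie in the interval, so the problem becomes that of finding the smallest admissible $t$ of the correct parity. Using that $c$ is odd, a parity check eliminates every ``window'' with $t<c$ and shows the largest non-representable square is produced either in a ``$\sqrt 2$-window'' (with $t$ near $c\sqrt 2$, realised by an odd $s$) or, if that window is empty, in a ``$\sqrt 3$-window'' (with $t$ near $c\sqrt 3$, realised by an even $s$); every intermediate $t$ produces only representable squares.

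In the $\sqrt 2$-window the admissible $t$ form an interval of length just below $2$ having an endpoint at $c\sqrt 2$, so there is a unique candidate, namely $t=2\lfloor c/\sqrt 2\rfloor$ when $a=c^2$ and $t=2\lfloor c/\sqrt 2\rfloor+2$ when $a+2=c^2$. Using the factorisation $f^2-2c^2=(f-c\sqrt 2)(f+c\sqrt 2)$ for the even integer $f$ bounding the window, together with the parities of $f$ and $c$, one sees that this candidate lies in the window \emph{unless} $c^2-2e^2=-1$ (in the case $a=c^2$), resp.\ $c^2-2e^2=+1$ (in the case $a+2=c^2$). An induction on the recursion \eqref{Recur} identifies the positive solutions of $x^2-2y^2=-1$ with $(x,y)=(u_{4n+1},u_{4n})$ and those of $x^2-2y^2=+1$ with $(x,y)=(u_{4n+3},u_{4n+2})$, so the exceptional set is precisely $c\in\bigcup_{n\ge 1}\{u_{4n+1}\}$, resp.\ $c\in\bigcup_{n\ge 0}\{u_{4n+3}\}$. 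When $c$ is not exceptional the candidate is valid, and, since $2\lfloor c/\sqrt 2\rfloor=2\lfloor\frac{(2b+1)\sqrt 2}{2}\rfloor$, this yields the first line of each formula in Theorem~\ref{Three}.

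Finally, if $c$ is exceptional the $\sqrt 2$-window is empty, so the largest non-representable square comes from the $\sqrt 3$-window, whose candidate works out to $t=2\lfloor\frac{c\sqrt 3-1}{2}\rfloor+1$ when $a=c^2$ and $t=2\lfloor\frac{c\sqrt 3-1}{2}\rfloor+3$ when $a+2=c^2$; in both cases $s=a-2\lfloor\frac{(2b+1)\sqrt 3-1}{2}\rfloor-1$, the value claimed in Theorem~\ref{Three}, \emph{provided} this candidate lies in its window. Congruences modulo $3$ and $8$ applied to $h^2-3c^2$, where $h$ is the odd integer bounding the $\sqrt 3$-window, rule out all but one failure mode; the remaining one forces a relation such as $c^2-3f^2=-2$ to hold \emph{simultaneously} with $c^2-2e^2=\mp1$. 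Each such pair is a system of simultaneous Pell equations and thus has only finitely many solutions; appealing to the known results on such systems shows that the only survivors are $c=1$ (excluded because $b\ge1$) and, in the case $a+2=c^2$, $c=3$, which is checked by hand ($r_2(7,9)=2^2$, in agreement with the formula) --- this also absorbs the sporadic value at $2b+1=u_5=7$ that was listed separately in Conjecture~\ref{Two}. The step I expect to be hardest is this last one: turning ``the $\sqrt 3$-candidate is admissible'' into a clean inequality and then settling the residual systems of simultaneous Pell equations; a secondary, pervasive difficulty is keeping all the floor functions and the irrationality of the interval endpoints under control so that the size estimates in the localisation step become rigorous.
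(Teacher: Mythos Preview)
Your plan is correct and follows essentially the same route as the paper: the interval criterion you state (an integer of the parity of $s$ in $[s^2/(a+2),\,s^2/a]$) is an equivalent reformulation of the paper's Lemma~\ref{Cri2}/Corollary~\ref{Cri4}, and your subsequent localisation to the $\sqrt{2}$- and $\sqrt{3}$-windows, identification of the exceptional $c$ via the Pell equations $c^2-2e^2=\mp1$ (matching Lemmas~\ref{P1}--\ref{P2}), and final appeal to simultaneous Pell systems mirror the paper's case analysis and its use of Theorem~\ref{Hardest} and Lemmas~\ref{L3}--\ref{L4}. Your self-assessment is accurate: the delicate part is exactly the simultaneous-Pell step (your example equation should read $f^2-3c^2=-2$ rather than $c^2-3f^2=-2$), and the paper handles small $a$ by restricting to $a\ge 50$ and checking the rest directly, just as you anticipate.
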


In this note, our goal is to prove Conjecture \ref{One} and Theorem \ref{Three}. We will need a bunch of helping tools and lemmas that are developed in the next section.

\section{Helping Tools}

Our proofs heavily rely on the helpful criterion described in Lemma \ref{Cri1} and Lemma \ref{Cri2} below.

\begin{lemma}
\label{Cri1}
For any $a \in \mathbb{N}$, a number $m \in \mathbb{N}$ can be expressed in the form $ax+(a+1)y$ for nonnegative integers $x$ and $y$ if and only if $ \left \lfloor \frac{m}{a} \right \rfloor \geq \frac{m}{a+1}$. 
\end{lemma}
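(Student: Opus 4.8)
The plan is to reparametrize the representation. Writing $m = ax+(a+1)y = a(x+y)+y$, I would set $s = x+y$, so that giving a representation of $m$ in the form $ax+(a+1)y$ with $x,y\ge 0$ is the same as giving a nonnegative integer $s$ and then taking $y = m-as$, $x = s-y$. The conditions $x\ge 0$ and $y\ge 0$ translate into $m-as\ge 0$ and $s-(m-as)\ge 0$, i.e. $\frac{m}{a+1}\le s\le \frac{m}{a}$ (and $s\ge 0$ is then automatic, since $m\ge 0$). Hence $m$ is representable if and only if the closed interval $\left[\frac{m}{a+1},\frac{m}{a}\right]$ contains an integer.

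Next I would note that this interval contains an integer exactly when $\left\lfloor \frac{m}{a}\right\rfloor \ge \left\lceil \frac{m}{a+1}\right\rceil$: indeed, any integer $s$ in the interval satisfies $s\le \frac{m}{a}$, hence $s\le \lfloor m/a\rfloor$, and $s\ge \frac{m}{a+1}$, hence $s\ge \lceil m/(a+1)\rceil$; conversely $s = \lfloor m/a\rfloor$ works whenever $\lfloor m/a\rfloor\ge \lceil m/(a+1)\rceil$. Finally, since $\lfloor m/a\rfloor$ is an integer, the inequality $\lfloor m/a\rfloor \ge \frac{m}{a+1}$ holds if and only if $\lfloor m/a\rfloor \ge \lceil \frac{m}{a+1}\rceil$. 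Chaining the three equivalences gives the claim.

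The argument is entirely elementary, so I do not expect a genuine obstacle; the only place that deserves a line of care is the last equivalence — that comparing the integer $\lfloor m/a\rfloor$ with the real number $\frac{m}{a+1}$ is the same as comparing it with $\lceil \frac{m}{a+1}\rceil$ — together with a quick check that the boundary cases (when $\frac{m}{a+1}$ or $\frac{m}{a}$ is itself an integer) are handled correctly by the use of closed inequalities throughout.
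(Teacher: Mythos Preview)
Your proof is correct and follows essentially the same idea as the paper's: both rewrite $m = ax+(a+1)y = a(x+y)+y$ and reduce the question to a quotient--remainder condition. Your packaging via the interval $[m/(a+1),\,m/a]$ and the ceiling--floor comparison is a bit more streamlined than the paper's forward direction (which introduces an auxiliary integer $K$ to force the remainder into $[0,a)$ before reaching $q\ge r$), but the substance is the same.
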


\begin{proof}
Suppose $m$ is of the form $ax+(a+1)y$ for some nonnegative integers $x$ and $y$, then we can rewrite $m$ as $$ m = a(x+y) + y. $$ Further, let $Ka \leq y < (K+1)a$ for some $K \geq 0$. Then, we can express $m$ as $$ m = a(x+y+K) + (y-Ka). $$ Note that $$ 0 \leq y-Ka < a. $$ By uniqueness of the division algorithm, it follows that the quotient $q$ and remainder $r$ when $m$ is divided by $a$ are given as $q=x+y+K$ and $r=y-Ka$. Solving for $x$ gives us $$ x = q-r-K(a+1). $$ Since $x \geq 0$ and $K \geq 0$, we have $q \geq r$. Therefore, $q \geq m-aq$ and thus $q \geq \frac{m}{a+1}$. As $q = \left \lfloor \frac{m}{a} \right \rfloor$, we get the required inequality $$ \left \lfloor \frac{m}{a} \right \rfloor \geq \frac{m}{a+1}.$$  

Conversely, suppose that $ \left \lfloor \frac{m}{a} \right \rfloor \geq \frac{m}{a+1}$. By the division algorithm, we have $m=aq+r$ for some quotient $q$ and remainder $r$ such that $0 \leq r < a$. The given condition then becomes $$ (a+1)q \geq aq+r, $$ which is equivalent to saying that $q \geq r$. We can then rewrite $m$ as $$ m = a(q-r) + (a+1)r. $$ Choosing $x = q-r \geq 0$ and $y = r \geq 0$, we get that $m$ can be expressed in the form $ax+(a+1)y$ for some nonnegative integers $x$ and $y$. 
\end{proof}

We can further refine the criterion in Lemma \ref{Cri1} to obtain the following criterion which is very easy to work with. Let $\lambda_j$ denote $\left \lfloor \frac{j^2}{a} \right \rfloor$.

\begin{corollary}
\label{Cri3}
The square Frobenius number $r_2(a,a+1)$ is equal to $(a-j_0)^2$ where $j_0$ is the smallest $1 \leq j < a$ such that the following condition holds:
\begin{equation}
\label{Eqn1}
 (j+1)^2 > (\lambda_j + 1)(a+1). 
 \end{equation}
\end{corollary}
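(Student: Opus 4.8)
The plan is to reduce the corollary to Lemma \ref{Cri1} by a change of variables together with Sylvester's formula for the ordinary Frobenius number. Assume $a \ge 2$ (otherwise $r_2(a,a+1)$ is not defined). The square Frobenius number is the largest non-representable perfect square, so the argument has two parts: first locate which squares are candidates for the maximum, then convert "non-representable" into the inequality \eqref{Eqn1}.

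For the first part I would argue that the largest non-representable perfect square has the form $(a-j)^2$ with $1\le j<a$. Indeed, by Sylvester's theorem the Frobenius number of $a$ and $a+1$ is $a^2-a-1$, so every integer exceeding $a^2-a-1$ is representable; in particular every square $k^2$ with $k\ge a$ is representable, since $k^2\ge a^2>a^2-a-1$. Since $0=0^2$ is trivially representable while $1=1^2$ is not (as $a\ge2$), the largest non-representable square lies among $1^2,2^2,\dots,(a-1)^2$. Listing these in decreasing order as $(a-1)^2>(a-2)^2>\cdots>1^2$ and writing the first non-representable one as $(a-j_0)^2$, we get exactly that $j_0$ is the smallest $j$ with $1\le j<a$ for which $(a-j)^2$ is not representable, and $r_2(a,a+1)=(a-j_0)^2$.

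For the second part I would translate "$(a-j)^2$ is not representable" into \eqref{Eqn1} using Lemma \ref{Cri1}. From $(a-j)^2=a(a-2j)+j^2$ one gets $\big\lfloor (a-j)^2/a\big\rfloor=(a-2j)+\lambda_j$ (valid for every $j$, since $a-2j\in\mathbb{Z}$). By Lemma \ref{Cri1}, $(a-j)^2$ is not representable precisely when $\big\lfloor (a-j)^2/a\big\rfloor<(a-j)^2/(a+1)$, i.e. $(a+1)\big((a-2j)+\lambda_j\big)<(a-j)^2$. Expanding both sides and cancelling the common terms $a^2-2aj$ reduces this to $a-2j+(a+1)\lambda_j<j^2$, which rearranges to $(a+1)(\lambda_j+1)<j^2+2j+1=(j+1)^2$, that is, exactly condition \eqref{Eqn1}. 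Combining this equivalence with the first part proves the corollary.

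I do not expect a genuine obstacle; the proof is essentially bookkeeping. The only points needing care are the boundary issues in the first part — confirming that the optimal square is really $(a-j)^2$ with $1\le j<a$ and not $0$, and that the degenerate small cases of $a$ cause no trouble — and making sure the identity $\big\lfloor (a-j)^2/a\big\rfloor=(a-2j)+\lambda_j$ is applied without an unnecessary case split on the sign of $a-2j$, which it allows since $a-2j$ is an integer.
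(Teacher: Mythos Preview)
Your proposal is correct and follows essentially the same route as the paper: restrict to squares $(a-j)^2$ with $1\le j<a$ via Sylvester's formula, compute $\big\lfloor(a-j)^2/a\big\rfloor=a-2j+\lambda_j$, and simplify the criterion of Lemma~\ref{Cri1} to \eqref{Eqn1}. Your treatment is in fact slightly more careful than the paper's about the boundary issues (existence of at least one non-representable square, and the validity of the floor identity regardless of the sign of $a-2j$).
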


\begin{proof}
First note that the Frobenius number of $a$ and $a+1$ is $a^2-a-1$. Thus every number $m \geq a^2-a$ can be expressed in the form $ax+(a+1)y$. Thus, to find the square Frobenius number for $a$ and $a+1$, we need to find the largest $i < a$ such that $i^2$ cannot be expressed in the form $ax+(a+1)y$. 

Let $j=a-i$, then equivalently we need to find the smallest $ 1 \leq j < a$ such that $(a-j)^2$ cannot be expressed in the form $ax+(a+1)y$. Then, by Lemma \ref{Cri1}, $m=(a-j)^2$ can be expressed in the form $ax+(a+1)y$ if and only if $$ \left \lfloor \frac{(a-j)^2}{a} \right \rfloor > \frac{(a-j)^2}{a+1}. $$ Note that $$ \left \lfloor \frac{(a-j)^2}{a} \right \rfloor = a-2j + \lambda_j. $$ Simplifying further gives the required criterion.  
\end{proof}

\begin{lemma}
\label{Cri2}
For any odd $a \in \mathbb{N}$, a number $m \in \mathbb{N}$ can be expressed in the form $ax+(a+2)y$ for nonnegative integers $x$ and $y$ if and only if one of the following statements hold:
\begin{enumerate}
\item $m$ mod $a$ is even and $ \left \lfloor \frac{m}{a} \right \rfloor \geq \frac{m}{a+2}$. 
\item $m$ mod $a$ is odd and $ \left \lfloor \frac{m}{a} \right \rfloor \geq \frac{m}{a+2} + 1$. 
\end{enumerate}
\end{lemma}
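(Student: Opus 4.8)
The plan is to run the same uniqueness-of-division argument used for Lemma \ref{Cri1}, while carefully tracking an additional parity constraint created by the coefficient $2$ in $a+2$. First I would suppose $m = ax + (a+2)y$ with $x,y \ge 0$, rewrite it as $m = a(x+y) + 2y$, and divide $2y$ by $a$: say $2y = Ka + r$ with $0 \le r < a$ and $K \ge 0$. By uniqueness of the division algorithm, $r = m \bmod a$ and $q := \left\lfloor \frac{m}{a} \right\rfloor = x+y+K$. Since $2y$ is even and $a$ is odd, $K$ and $r$ necessarily have the same parity. Solving $x = q - K - y = q - K - \frac{Ka+r}{2} \ge 0$ rearranges to $2q - r \ge K(a+2)$. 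If $r$ is even, $K=0$ is allowed and the weakest constraint is $2q \ge r$, equivalently $\left\lfloor \frac{m}{a}\right\rfloor \ge \frac{m}{a+2}$; if $r$ is odd, then $K \ge 1$, forcing $2q - r \ge a+2$, equivalently $\left\lfloor \frac{m}{a} \right\rfloor \ge \frac{m}{a+2} + 1$. This handles the forward direction.

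For the converse I would exhibit an explicit representation in each case. If $r := m \bmod a$ is even and $\left\lfloor \frac{m}{a} \right\rfloor \ge \frac{m}{a+2}$ (so $2q \ge r$), take $y = r/2$ and $x = q - r/2$; these are nonnegative integers and $ax + (a+2)y = aq + r = m$. If $r$ is odd and $\left\lfloor \frac{m}{a} \right\rfloor \ge \frac{m}{a+2} + 1$ (so $2q \ge r + a + 2$), take $y = (a+r)/2$, which is an integer because $a$ and $r$ are both odd, and $x = q - (a+r)/2 - 1$; then $x,y \ge 0$ and a one-line computation gives $ax + (a+2)y = aq + r = m$.

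The only place this proof differs in substance from Lemma \ref{Cri1} is the bookkeeping of the parity of $K$: when $r$ is odd the smallest admissible value of $K$ is $1$ rather than $0$, and it is exactly this shift that produces the extra $+1$ in the second alternative. I do not expect a genuine obstacle here; the mild subtlety is just making sure the parity of $Ka+r$ is handled correctly so that $y = (Ka+r)/2$ is an integer, and that larger admissible values of $K$ only tighten the inequality and so cannot produce new solutions.
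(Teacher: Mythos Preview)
Your proof is correct and follows essentially the same route as the paper's: the same rewriting $m = a(x+y)+2y$, the same division of $2y$ by $a$ to identify $q$ and $r$, the same parity observation that $K$ and $r$ agree modulo $2$ (forcing $K\ge 1$ when $r$ is odd), and the same explicit representations $m=a(q-r/2)+(a+2)(r/2)$ and $m=a\bigl(q-\tfrac{r+a+2}{2}\bigr)+(a+2)\tfrac{r+a}{2}$ in the converse. The only cosmetic difference is that the paper phrases the key inequality as $q\ge \frac{m}{a+2}+K$ rather than $2q-r\ge K(a+2)$; these are equivalent.
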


\begin{proof}
The proof of Lemma \ref{Cri2} is similar in spirit to that of Lemma \ref{Cri1}. However, there are some additional complications and we provide all the details here for the sake of completeness.

Suppose $m$ is of the form $ax+(a+2)y$ for some nonnegative integers $x$ and $y$, then we can rewrite $m$ as $$ m = a(x+y) + 2y. $$ Further, let $Ka \leq 2y < (K+1)a$ for some $K \geq 0$. Then, we can express $m$ as $$ m = a(x+y+K) + (2y-Ka). $$ Note that $$ 0 \leq 2y-Ka < a. $$ By uniqueness of the division algorithm, it follows that the quotient $q$ and remainder $r$ when $m$ is divided by $a$ are given as $q=x+y+K$ and $r=2y-Ka$. Solving for $x$ gives us $$ x = q-\frac{r+K(a+2)}{2}. $$ Since $x \geq 0$, we have $q \geq \frac{r+K(a+2)}{2}$. Therefore, $q \geq \frac{m-aq+K(a+2)}{2}$ and simplifying further, we get

$$q \geq \frac{m}{a+2} + K.$$

 Since $q = \left \lfloor \frac{m}{a} \right \rfloor$, we get the inequality 
 \begin{equation}
\label{Stepping}
 \left \lfloor \frac{m}{a} \right \rfloor \geq \frac{m}{a+2} + K.
 \end{equation}
  Next, we need to consider two cases based on whether $r$ is even or odd. 

Case $1$: Suppose $r$ is even. Then the required inequality is $ \left \lfloor \frac{m}{a} \right \rfloor \geq \frac{m}{a+2}$ which clearly follows from \eqref{Stepping}. 

Case $2$: Suppose $r$ is odd. Note that $r=2y-Ka$, and $a$ is odd, thus $K$ is also odd. In particular $K \geq 1$ and then the required inequality $ \left \lfloor \frac{m}{a} \right \rfloor \geq \frac{m}{a+2} + 1$ again follows from \eqref{Stepping}.

Conversely, suppose that one of the given conditions is true. Again, let $q$ and $r$ denote the quotient and remainder when $m$ is divided by $a$. We need to consider two cases according to whether the first or the second condition in the statement of the lemma is true.

Case $1$: Suppose $r$ is even and $ q \geq \frac{aq+r}{a+2}$. This condition is then equivalent to saying that $ q \geq \frac{r}{2}$. Then, we can rewrite $m$ as $$ m = a\left(q-\frac{r}{2}\right) + (a+2) \left(\frac{r}{2}\right), $$ which is of the required form. 

Case $2$: Suppose $r$ is odd and $ q \geq \frac{aq+r}{a+2} + 1$. Simplifying this condition further, we get that $$ q \geq \frac{r+a+2}{2}. $$ Then, we can rewrite $m$ as $$ m = a \left(q-\frac{r+a+2}{2}\right) + (a+2) \left(\frac{r+a}{2} \right), $$ which is of the required form.

\end{proof}

Similar to Corollary \ref{Cri3}, we obtain the following helpful criterion. Since the proof is very similar and elementary, we skip the details here. 

\begin{corollary}
\label{Cri4}
For an odd number $a$, the square Frobenius number $r_2(a,a+2)$ is equal to $(a-j_1)^2$ where $j_1$ is the smallest $1 \leq j < a$ such that at least one of the following conditions holds:
\begin{enumerate}
\item $j^2$ mod $a$ is even and 
\begin{equation}
\label{Eqn2}
 (j+2)^2 > (\lambda_j + 2)(a+2). 
 \end{equation}
\item $j^2$ mod $a$ is odd and 
 \begin{equation}
\label{Eqn3}
(j+2)^2 > (\lambda_j + 1)(a+2). 
\end{equation}
\end{enumerate}
\end{corollary}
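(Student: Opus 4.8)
The plan is to mirror the proof of Corollary~\ref{Cri3}, replacing Lemma~\ref{Cri1} by Lemma~\ref{Cri2} and carrying along the extra parity bookkeeping. First, by Sylvester's formula the ordinary Frobenius number of the coprime pair $a$ and $a+2$ is $a(a+2)-a-(a+2)=a^2-2$, so every integer $m\geq a^2-1$ is representable in the form $ax+(a+2)y$. Since $a^2=a\cdot a$ is trivially representable, the square Frobenius number, when it exists, must be $i^2$ for some $1\leq i\leq a-1$; writing $j=a-i$, we therefore want the smallest $j$ with $1\leq j<a$ for which $(a-j)^2$ is \emph{not} representable, and then $r_2(a,a+2)=(a-j_1)^2$ for that $j_1$.

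Next I would compute the quotient and remainder of $(a-j)^2$ modulo $a$. From $(a-j)^2=a^2-2aj+j^2=a\,(a-2j+\lambda_j)+(j^2\bmod a)$ together with $0\leq j^2\bmod a<a$, the uniqueness of the division algorithm gives $\left\lfloor\frac{(a-j)^2}{a}\right\rfloor=a-2j+\lambda_j$ and $(a-j)^2\equiv j^2\pmod a$. In particular the parity of $(a-j)^2\bmod a$ equals the parity of $j^2\bmod a$, which is how the two cases of Lemma~\ref{Cri2} become stated in terms of $j$.

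Now apply Lemma~\ref{Cri2} to $m=(a-j)^2$ and negate each criterion. In the even case, $(a-j)^2$ fails to be representable precisely when $a-2j+\lambda_j<\frac{(a-j)^2}{a+2}$; clearing the denominator and expanding both $(a-j)^2$ and $(a-2j+\lambda_j)(a+2)$, the terms $a^2-2aj$ cancel, and after adding $4$ to both sides to complete the square one is left with $(j+2)^2>(\lambda_j+2)(a+2)$, which is exactly \eqref{Eqn2}. In the odd case, non-representability means $a-2j+\lambda_j<\frac{(a-j)^2}{a+2}+1$, i.e. $(a-2j+\lambda_j-1)(a+2)<(a-j)^2$; the same expansion, now with the extra $-(a+2)$ on the left, simplifies in the same way to $(j+2)^2>(\lambda_j+1)(a+2)$, which is \eqref{Eqn3}. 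Taking the smallest $j$ at which at least one of these two parity-matched conditions holds gives $j_1$, and hence $r_2(a,a+2)=(a-j_1)^2$.

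The computations are entirely routine; the only point needing a little care is the interplay between the two parity branches of Lemma~\ref{Cri2} and the $+1$ shift in the odd branch, which is precisely what produces the different coefficients $(\lambda_j+2)$ and $(\lambda_j+1)$ in \eqref{Eqn2} versus \eqref{Eqn3}. One should also note in passing that $a-2j+\lambda_j\geq 0$, so that it really is the quotient $\left\lfloor\frac{(a-j)^2}{a}\right\rfloor$; this is automatic since $(a-j)^2\geq 0$ while the remainder $j^2\bmod a$ lies in $[0,a)$.
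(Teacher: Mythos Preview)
Your argument is correct and is precisely the proof the paper has in mind: it mirrors the derivation of Corollary~\ref{Cri3} from Lemma~\ref{Cri1}, now using Lemma~\ref{Cri2} and tracking the even/odd branch, and your algebraic simplification to \eqref{Eqn2} and \eqref{Eqn3} checks out. The paper itself omits these details, saying only that the proof is ``very similar and elementary''; you have supplied them faithfully.
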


We will also require the knowledge of the solutions of the Pell's equations $x^2-2y^2 = 1$ and $x^2-2y^2=-1$. To know more about the solutions to these equations, refer to \cite[Section 7.8]{Niven} or \cite{Conrad1} and \cite{Conrad2}.

\begin{lemma}
\label{P1}
The positive integer solutions of $x^2-2y^2 = 1$ are given by $ \{(u_{4n-1},u_{4n-2}): n \geq 1\}$.
\end{lemma}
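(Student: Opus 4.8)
The plan is to reduce the lemma to the classical description of the solutions of $x^2-2y^2=1$ via the fundamental unit $3+2\sqrt{2}$, and then to check that the subsequence $(u_{4n-1},u_{4n-2})$ obeys exactly the recursion that this description forces. Recall (see \cite[Section 7.8]{Niven}) that $(3,2)$ is the smallest positive solution and that every positive solution $(x,y)$ arises from it via $x+y\sqrt{2}=(3+2\sqrt{2})^n$ for a unique $n\geq 1$; equivalently, the positive solutions are generated from $(3,2)$ by the substitution $(x,y)\mapsto(3x+4y,\,2x+3y)$, which is precisely multiplication by $3+2\sqrt{2}$ inside $\mathbb{Z}[\sqrt{2}]$.

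First I would establish, for every $m\geq 1$, the two linear identities
\begin{align*}
u_{4m+2} &= 2u_{4m-1}+3u_{4m-2},\\
u_{4m+3} &= 3u_{4m-1}+4u_{4m-2}.
\end{align*}
These follow by unwinding the interleaved recursion \eqref{Recur}: putting $p=u_{4m-2}$ and $q=u_{4m-1}$, four successive applications of \eqref{Recur} (with $n=2m$ and then $n=2m+1$, both of which satisfy the hypothesis $n\geq 2$ once $m\geq 1$) give in turn $u_{4m}=p+q$, then $u_{4m+1}=2p+q$, then $u_{4m+2}=3p+2q$, and finally $u_{4m+3}=4p+3q$. The base values $u_2=2$, $u_3=3$ supply the starting data, so nothing is circular.

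With these identities in hand I would prove by induction on $n$ that $u_{4n-1}+u_{4n-2}\sqrt{2}=(3+2\sqrt{2})^n$ for all $n\geq 1$. The base case $n=1$ reads $u_3+u_2\sqrt{2}=3+2\sqrt{2}$, and the inductive step is exactly
$$u_{4n+3}+u_{4n+2}\sqrt{2}=(3u_{4n-1}+4u_{4n-2})+(2u_{4n-1}+3u_{4n-2})\sqrt{2}=(3+2\sqrt{2})\bigl(u_{4n-1}+u_{4n-2}\sqrt{2}\bigr).$$
Since the norm of $3+2\sqrt{2}$ is $3^2-2\cdot 2^2=1$, taking norms gives $u_{4n-1}^2-2u_{4n-2}^2=1$ automatically, so each $(u_{4n-1},u_{4n-2})$ is a solution; and because $u_{4n-1}+u_{4n-2}\sqrt{2}$ runs through all the powers $(3+2\sqrt{2})^n$, $n\geq 1$, the pairs $(u_{4n-1},u_{4n-2})$ run through \emph{all} positive solutions, which is the claim.

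The only real obstacle is bookkeeping: verifying that the index constraints in \eqref{Recur} are respected when deriving the two linear identities, and confirming the two standard Pell facts invoked at the start — that $(3,2)$ is genuinely the fundamental solution and that $(x,y)\mapsto(3x+4y,2x+3y)$ is the correct ``next solution'' map. No simultaneous-Pell machinery enters here; this lemma is just the single-equation input used later in the paper.
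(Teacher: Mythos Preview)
Your proof is correct and follows essentially the same approach as the paper: both invoke the classical parametrization of solutions by powers of the fundamental unit $3+2\sqrt{2}$, then verify by unwinding \eqref{Recur} that $(u_{4n-1},u_{4n-2})$ satisfies the recurrence $(x,y)\mapsto(3x+4y,2x+3y)$ with the correct initial value. The only cosmetic difference is that the paper checks two base cases and matches recurrences, whereas you package the same computation as a direct induction on $u_{4n-1}+u_{4n-2}\sqrt{2}=(3+2\sqrt{2})^n$.
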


\begin{proof}
Note that from the standard theory of Pell's equations using continued fraction expansion of $\sqrt{2}$ (\cite[Theorem 5.3]{Conrad1}), we know that all positive integer solutions of $x^2-2y^2=1$ are given by $$ x_n + \sqrt{2} y_n = (3+2\sqrt{2})^n. $$ Let $v_n$ denote $u_{4n-1}$ and $w_n$ denote $u_{4n-2}$. Our goal is to show that $x_n = v_n$ and $y_n = w_n$ for all $n \in \mathbb{N}$. Our strategy is to show that $(x_1,y_1) = (v_1,w_1)$, $(x_2,y_2) = (v_2,w_2)$ and that $(x_n,y_n)$ and $(v_n,w_n)$ satisfy the same recurrence relations. 
Using the values of $u_n$ in \eqref{Values}, it is clear that $$ (x_1,y_1) = (3,2) = (u_3,u_2) = (v_1,w_1), $$ $$ (x_2,y_2) = (17,12) = (u_7,u_6) = (v_2,w_2). $$ Next, we find recurrences for $(x_n,y_n)$. We have $$ (x_n + \sqrt{2} y_n) = (x_{n-1} + \sqrt{2} y_{n-1}) (3 + 2\sqrt{2}). $$ Therefore, we get the recurrences $x_n = 3x_{n-1} + 4y_{n-1}$ and $y_n = 3y_{n-1} + 2x_{n-1}$. Next, we show that $(v_n,w_n)$ also satisfy these recurrences. We show this using the recurrence relations for $u_n$ in \eqref{Recur}. We have 

 \begin{equation*}
  \label{second1}
  \begin{aligned}
    v_n = u_{4n-1} &= u_{4n-2} + u_{4n-4} \\
    &= (u_{4n-3} + u_{4n-4}) + u_{4n-4} \\ 
    &=  u_{4n-3} + 2u_{4n-4} \\
    &= (u_{4n-4} + u_{4n-6}) + 2u_{4n-4} \\
    &= 3u_{4n-4} + u_{4n-6} \\
    &= 3(u_{4n-5} + u_{4n-6}) + u_{4n-6} \\
    &= 3u_{4n-5} + 4u_{4n-6} \\
    &= 3v_{n-1} + 4w_{n-1}.  
    \end{aligned}
   \end{equation*}
   
  Similarly, we have 

 \begin{equation*}
  \label{second2}
  \begin{aligned}
    w_n = u_{4n-2} &= u_{4n-3} + u_{4n-4} \\
    &= (u_{4n-4} + u_{4n-6}) + u_{4n-4} \\ 
    &= 2u_{4n-4} +  u_{4n-6} \\
    &= 2(u_{4n-5} + u_{4n-6}) + u_{4n-6} \\
    &= 2u_{4n-5} + 3u_{4n-6} \\
    &= 3w_{n-1} + 2v_{n-1}.  
    \end{aligned}
   \end{equation*}

Hence $(x_n,y_n) = (v_n,w_n)$ for all $n \geq 1$. 
\end{proof}

From the standard theory of Pell's equations (\cite[Theorem 3.3]{Conrad2}), we know that all positive integer solutions of $x^2-2y^2=-1$ are given by $$ x_n + \sqrt{2} y_n = (1+\sqrt{2})^{2n+1}. $$ Then using methods very similar to the proof of Lemma \ref{P1}, one can prove the following lemma. 

\begin{lemma}
\label{P2}
The positive integer solutions of $x^2-2y^2 = -1$ are given by $ (1,1) \cup \{(u_{4n+1},u_{4n}): n \geq 1\}$.
\end{lemma}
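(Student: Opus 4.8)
The plan is to mimic the proof of Lemma~\ref{P1} almost verbatim. By the standard theory of Pell's equations already quoted in the excerpt (\cite[Theorem 3.3]{Conrad2}), the complete list of positive integer solutions of $x^2 - 2y^2 = -1$ is $\{(x_n,y_n) : n \geq 0\}$, where $x_n + \sqrt{2}\,y_n = (1+\sqrt{2})^{2n+1}$. The case $n=0$ gives $(x_0,y_0) = (1,1)$, and since $1^2 - 2\cdot 1^2 = -1$ while the recursive sequence is indexed from $u_1$ (so $u_0$ is not defined), this particular solution must be listed on its own; this is exactly the isolated pair $(1,1)$ appearing in the statement. For $n \geq 1$ I would set $v_n := u_{4n+1}$ and $w_n := u_{4n}$ and prove that $(x_n,y_n) = (v_n,w_n)$.

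The argument then has the same three ingredients as the proof of Lemma~\ref{P1}. First, two base cases: a direct expansion gives $(x_1,y_1) = (7,5)$ and $(x_2,y_2) = (41,29)$, and from the values in \eqref{Values} we have $(u_5,u_4) = (7,5) = (v_1,w_1)$ and $(u_9,u_8) = (41,29) = (v_2,w_2)$. Second, from $x_n + \sqrt{2}\,y_n = (x_{n-1} + \sqrt{2}\,y_{n-1})(1+\sqrt{2})^2 = (x_{n-1} + \sqrt{2}\,y_{n-1})(3+2\sqrt{2})$ one reads off the recurrences $x_n = 3x_{n-1} + 4y_{n-1}$ and $y_n = 2x_{n-1} + 3y_{n-1}$ --- the very same recurrences as in Lemma~\ref{P1}, since $(1+\sqrt{2})^2$ equals the fundamental solution $3+2\sqrt{2}$ of $x^2 - 2y^2 = 1$. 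Third, one checks that $v_n$ and $w_n$ obey these same recurrences for $n \geq 2$: repeatedly applying \eqref{Recur} gives $u_{4n+1} = u_{4n} + u_{4n-2} = u_{4n-1} + 2u_{4n-2} = 3u_{4n-2} + u_{4n-4} = 3u_{4n-3} + 4u_{4n-4} = 3v_{n-1} + 4w_{n-1}$, and likewise $u_{4n} = u_{4n-1} + u_{4n-2} = 2u_{4n-2} + u_{4n-4} = 2u_{4n-3} + 3u_{4n-4} = 2v_{n-1} + 3w_{n-1}$. Since the base cases and the recurrences agree, induction on $n$ yields $(x_n,y_n) = (v_n,w_n)$ for all $n \geq 1$, which together with the $n=0$ solution $(1,1)$ is precisely the set claimed.

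There is no real obstacle here; the proof is routine once Lemma~\ref{P1} is available. The only points requiring care are bookkeeping ones: ensuring that every index appearing when $u_{4n+1}$ and $u_{4n}$ are expanded (the smallest being $4n-4$) stays $\geq 1$ and that each use of \eqref{Recur} lies in its stated range of validity --- both hold for $n \geq 2$, which is exactly why the case $n=1$ has to be verified directly --- and remembering that the solution $(1,1)$ lies outside the $u$-indexing and so must be adjoined separately. One could instead state the lemma uniformly by formally setting $u_0 := 1$, but keeping the statement in the form given in \cite{MainPaper} is cleaner.
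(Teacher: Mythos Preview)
Your proposal is correct and is exactly the argument the paper has in mind: the paper does not spell out a proof of Lemma~\ref{P2} but explicitly says to use methods very similar to the proof of Lemma~\ref{P1}, and your write-up carries out precisely those steps (same recurrence $x_n = 3x_{n-1}+4y_{n-1}$, $y_n = 2x_{n-1}+3y_{n-1}$ coming from multiplication by $(1+\sqrt{2})^2 = 3+2\sqrt{2}$, same chain of applications of \eqref{Recur}, and the separate handling of $(1,1)$). Your bookkeeping remarks about the index range for \eqref{Recur} are also accurate.
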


In our proofs of the conjectures, we will also be required to conclude that certain systems of simultaneous Pell's equations do not have a common solution. Fortunately, these pairs of equations have been studied very well using all sorts of methods such as elementary, analytic and those involving arithmetic geometry. We will need a bunch of these results to prove the following theorem that will play a crucial role later.

\begin{theorem}
\label{Hardest}
The pair of Pell's equations 
\begin{align*}
x^2-2b^2 = \lambda \\
y^2-3b^2 = \mu 
\end{align*}
has no solutions in positive integers $(x,y,b)$ for $(\lambda, \mu) = (1,1), (-1,-2), (-1,-3)$. Moreover, for $(\lambda, \mu) = (2,1)$, the only solution in positive integers is $(x,y,b) = (2,2,1)$, for $(\lambda, \mu) = (-2,-2)$, the only solution in positive integers is $(x,y,b) = (4,5,3)$, while for $(\lambda, \mu) = (2,6)$, the only solution in positive integers is $(x,y,b) = (2,3,1)$.
\end{theorem}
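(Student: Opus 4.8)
The plan is to dispose of the six pairs $(\lambda,\mu)$ by one common reduction and then, case by case, to quote what is known about simultaneous Pell equations. Fix a positive solution $(x,y,b)$. Since $\lambda\in\{1,-1,2,-2\}$ throughout, the first equation $x^2-2b^2=\lambda$ by itself already confines $b$ to an explicit binary recurrence: if $\lambda=1$ then Lemma \ref{P1} forces $b=u_{4n-2}$; if $\lambda=-1$ then Lemma \ref{P2} forces $b\in\{1\}\cup\{u_{4n}:n\ge1\}$; and if $\lambda=\pm2$ then $x$ must be even, and writing $x=2c$ turns the first equation into $b^2-2c^2=\mp1$, so that Lemmas \ref{P1} and \ref{P2} give $b\in\{u_{4n-1}:n\ge1\}$ for $\lambda=-2$ and $b\in\{1\}\cup\{u_{4n+1}:n\ge1\}$ for $\lambda=2$. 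Substituting ``$b$ belongs to this recurrence'' into the second equation reduces the theorem to a finite search together with an asymptotic statement: one must show that, along the recurrence at hand, $3b^2+\mu$ is a perfect square for only finitely many (indeed small) $b$, and then check those finitely many candidates directly against both equations and against the positivity of $x,y$, reading off exactly the solution set in the statement.

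For the ``sporadic'' pairs $(2,1),(-2,-2),(2,6)$ the finite check is immediate once the asymptotic part is in place: one verifies that the triple named in the theorem satisfies both equations and has its $b$ in the relevant recurrence (for instance $b=3=u_3$ for $(-2,-2)$, and $b=1$ for the two with $\lambda=2$), and that no larger member of the recurrence works. For the pairs $(1,1),(-1,-2),(-1,-3)$ one likewise checks the small members and must rule out all larger ones. To get the asymptotic part I would first try to eliminate $b$ algebraically: subtracting the two equations gives $y^2-x^2=b^2+(\mu-\lambda)$, while $3x^2-2y^2=3\lambda-2\mu$ removes $b$ entirely and ties $(x,y)$ to a single Pell-type equation. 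For $(1,1)$, say, this yields $x^2+b^2=y^2$; since $\gcd(x,b)=1$ with $x$ odd, the primitive parametrization $x=s^2-r^2,\ b=2rs,\ y=s^2+r^2$ turns $x^2-2b^2=1$ into $s^4-10s^2r^2+r^4=1$, i.e.\ $u^2-24v^2=1$ with $v=r^2$ forced to be a perfect square, and one cites the classification of squares in that Pell sequence (only $v=0$) to finish. Where no such elementary route is available I would look for a covering congruence --- a modulus $M$ modulo which the $b$-residues allowed by $x^2-2b^2=\lambda$ are disjoint from those allowed by $y^2-3b^2=\mu$ --- and, failing that, invoke one of the fully worked simultaneous Pell systems in the literature (Baker--Davenport-style effective bounds, the Pad\'e/hypergeometric method, or the tables in surveys on pairs of Pell equations).

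The main obstacle is exactly this last ingredient for the pairs that admit no modular obstruction, which I expect to be $(1,1)$ and $(-1,-3)$ --- together with establishing \emph{uniqueness} in the sporadic cases. Excluding all large $b$ there is genuinely non-elementary: it is equivalent to showing that a specified binary recurrence $3b_n^2+\mu$ takes a square value at most once, or that a particular two-variable Pell system has only its obvious solutions. The plan for these is to pin each such subproblem to the precise shape of a system already settled in the literature and to cite that result, rather than to reprove it. The remaining steps --- the forced evenness of $x$ when $\lambda$ is even, membership of the listed triples in the recurrences, the small-$b$ verifications, and the bookkeeping between the $u$-notation of Lemmas \ref{P1}--\ref{P2} and arithmetic in $\mathbb{Z}[\sqrt2]$ --- are routine.
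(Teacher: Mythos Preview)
Your proposal is an outline rather than a proof, and the part you flag as ``the main obstacle'' is in fact the entire content of the theorem. Confining $b$ to the recurrences coming from Lemmas~\ref{P1}--\ref{P2} is fine, but after that you still owe, for each of the six pairs, a proof that $3b^2+\mu$ is a square for at most one term of the relevant recurrence. You explicitly defer this to ``a system already settled in the literature'' without naming any such system; that is precisely the work the paper does, and without it nothing is established. Your sketch for $(1,1)$ via Pythagorean triples also contains an error: in the Pell equation $u^2-24v^2=1$ the value $v=1$ is a perfect square (from $(u,v)=(5,1)$), so ``only $v=0$'' is false as stated; you would additionally need $u+5v$ to be a square, and you have not argued that.

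The paper's argument is organized quite differently and is worth comparing. Rather than funneling everything through the $u_n$-recurrences, it treats each pair by an ad~hoc algebraic reduction to a \emph{named} result: $(1,1)$ is Gloden; $(-1,-2)$ is disposed of elementarily by observing that $1,\,b^2,\,2b^2-1,\,3b^2-2$ would be four squares in arithmetic progression, contradicting Fermat; $(-1,-3)$ is rewritten (via $y=3w$) as the pair $b^2-3w^2=1$, $x^2-6w^2=1$, which is Boutin--Teilhet; $(2,1)$ becomes $b^2-2u^2=-1$, $y^2-3b^2=1$, handled by Katayama--Levesque--Nakahara; $(-2,-2)$ becomes $b^2-2w^2=1$, $y^2-6w^2=1$, and Anglin's ``at most one solution'' theorem for such pairs pins down $(4,5,3)$; finally $(2,6)$ is done by Rickert's effective bound $\max(|x|,|y|,|b|)<10^{96}$ followed by a finite computer check over the explicit Pell parametrizations. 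The advantage of the paper's route is that each citation is concrete and the two elementary cases ($(-1,-2)$ via four squares in AP, and the Rickert bound for $(2,6)$) are fully self-contained modulo those two classical inputs. Your recurrence framework is a reasonable way to \emph{organize} the search for such citations, but it does not replace them.
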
 

\begin{proof}
The case $(\lambda, \mu) = (1,1)$ was proved by Gloden (see \cite{Gloden}). 

Suppose there is some solution in the case $(\lambda, \mu) = (-1,-2)$. Then, $2b^2-1$ and $3b^2-2$ are perfect squares which contradicts Fermat's theorem \cite{Fermat} asserting that there cannot be four perfect squares in an arithmetic progression (in this case $1$, $b^2$, $2b^2-1$ and $3b^2-2$).

Suppose $(\lambda, \mu) = (-1,-3)$. Let $(x,y,b)$ be a positive integer solution to the pair of equations 
\begin{equation}
\label{10}
x^2-2b^2 = -1,
\end{equation}
 \begin{equation}
 \label{11}
 y^2 - 3b^2 = -3. 
 \end{equation}
 
 Then, since $y$ is divisible by $3$, we have $y = 3w$ for some $w \in \mathbb{N}$. Then, \eqref{11} can be rewritten as 
  \begin{equation}
 \label{12}
 b^2-3w^2=1. 
 \end{equation}
 
 Multiplying \eqref{12} with $2$ and adding to \eqref{10}, we get 
 \begin{equation}
 \label{13}
 x^2-6w^2=1. 
 \end{equation}
 
 Boutin and Teilhet \cite{Boutin} proved that \eqref{12} and \eqref{13} have no simultaneous positive integer solution $(x,b,w)$. 
 
 Suppose $(\lambda, \mu) = (2,1)$. Let $(x,y,b)$ be a positive integer solution to the pair of equations 
\begin{equation}
\label{14}
x^2-2b^2 = 2,
\end{equation}
 \begin{equation}
 \label{15}
 y^2 - 3b^2 = 1. 
 \end{equation}
 
 Since $x$ is divisible by $2$, we have $x = 2u$ for some $u \in \mathbb{N}$. Then, \eqref{14} can be rewritten as 
  \begin{equation}
 \label{16}
 b^2-2u^2=-1. 
 \end{equation}
 
 Katayama, Levesque and Nakahara \cite{Katayama} proved that \eqref{15} and \eqref{16} have only one simultaneous positive integer solution $(u,y,b) = (1,2,1)$, and thus \eqref{14} and \eqref{15} have only one simultaneous positive integer solution $(x,y,b) = (2,2,1)$.
 
 Next, suppose $(\lambda, \mu) = (-2,-2)$.  Let $(x,y,b)$ be a solution to the pair of equations 
\begin{equation}
\label{17}
x^2-2b^2 = -2,
\end{equation}
 \begin{equation}
 \label{18}
 y^2 - 3b^2 = -2. 
 \end{equation}
 
 Since $x$ is divisible by $2$, we have $x = 2w$ for some $w \in \mathbb{N}$. Then, \eqref{17} can be rewritten as 
  \begin{equation}
 \label{19}
 b^2-2w^2=1. 
 \end{equation}
 
 Multiplying \eqref{19} with $3$ and adding it to \eqref{18}, we get 
  \begin{equation}
 \label{20}
 y^2-6w^2=1. 
 \end{equation}
 
 Next, we study the simultaneous positive integer solutions of \eqref{19} and \eqref{20}. Anglin proved that the pair of Pell's equations 
 
  \begin{equation}
 \label{21}
 \begin{aligned}
 x^2-az^2=1, \\
 y^2-bz^2 = 1,
 \end{aligned}
 \end{equation}
 has at most one solution if $\max(a,b) \leq 200$. For details, refer to \cite{Anglin} or \cite{Bennett}. Thus \eqref{19} and \eqref{20} have at most one common solution. It is easy to see that $(w,y,b) = (2,5,3)$ is a solution to \eqref{19} and \eqref{20}. Thus $(x,y,b) = (4,5,3)$ is the only solution to \eqref{17} and \eqref{18}.

Finally, suppose we have the case $(\lambda, \mu) = (2,6)$. Thus, we need to consider the simultaneous equations  
\begin{equation}
\begin{aligned}
\label{2,6}
x^2-2b^2 = 2, \\
y^2-3b^2 = 6. 
\end{aligned}
\end{equation}

For this case, we use the approach described in \cite[Section 2]{Kata} using Rickert's lemma \cite[Lemma 2-1]{Kata}  described below.

\begin{lemma}[Rickert's Lemma]
Let $u$ and $v$ be non-zero integers. Suppose the pair of simultaneous Pell's equations given by 
\begin{equation}
\begin{aligned}
\label{Rick}
x^2-Ay^2 = u \\
z^2-By^2 = v 
\end{aligned}
\end{equation}
has a simultaneous integer solution $(x,y,z)$. Then, $\max(|x|,|y|,|z|) \leq (10^7 \max(|u|,|v|))^{12}$. 
\end{lemma}

Thus by Rickert's lemma, we get that the solutions $(x,y,b)$ of \eqref{2,6} satisfy $ \max(|x|,|y|,|z|) \leq (6 \times 10^{7})^{12} < 10^{96}$. Next, using solutions of generalized Pell's equation (\cite[Theorem 3.3]{Conrad2}), we write out the general solutions of \eqref{2,6}. 

The solutions of $x^2-2b^2=2$ are given by 

\begin{equation}
\label{xb}
\begin{aligned}
x_m = \frac{(2+\sqrt{2})(3+2\sqrt{2})^m + (2-\sqrt{2})(3-2\sqrt{2})^m}{2}, \\
b = \frac{(2+\sqrt{2})(3+2\sqrt{2})^m - (2-\sqrt{2})(3-2\sqrt{2})^m}{2\sqrt{2}},
\end{aligned}
\end{equation}
for some nonnegative integer $m$. Similarly, the solutions of $y^2-2b^2=6$ are given by 

\begin{equation}
\label{y6}
\begin{aligned}
y_n = \frac{(3+\sqrt{3})(2+\sqrt{3})^n + (3-\sqrt{3})(2-\sqrt{3})^n}{2}, \\
b = \frac{(3+\sqrt{3})(2+\sqrt{3})^n - (3-\sqrt{3})(2-\sqrt{3})^n}{2\sqrt{3}},
\end{aligned}
\end{equation}
for some nonnegative integer $n$. Next, we bound $m$ and $n$. Using \eqref{xb}, we have $$ (2+\sqrt{2})(3+2\sqrt{2})^m \leq 2x_m \leq 2 \times 10^{96}. $$ Simplifying further by taking logarithms, we get that $m \leq 137$. Similarly, using \eqref{y6}, we have $$ (3+\sqrt{3})(2+\sqrt{3})^n \leq 2y_n \leq 2 \times 10^{96}. $$ Simplifying further by taking logarithms, we get that $n \leq 191$. For $m \leq 137$ and $n \leq 191$, it can be easily verified using a computer calculation that the expressions for $b$ in \eqref{xb} and \eqref{y6} never match for any values of $m$ and $n$, except when $m$ and $n$ are both $0$, in which case the solution is $(x,y,b) = (2,3,1)$.

\end{proof}

Using Theorem \ref{Hardest}, it is easy to prove the following lemmas, which will be required in the proofs of Conjecture \ref{One} and Theorem \ref{Three}.

\begin{lemma}
\label{L1}
There exists no $b \geq 2$ such that both of the following statements hold:
\begin{enumerate}
\item Either $2b^2+1$ is a perfect square or $2b^2+2$ is a perfect square.
\item $3b^2+1$ is a perfect square.
\end{enumerate}
\end{lemma}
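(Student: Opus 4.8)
The plan is to observe that each of the two conditions in the lemma translates directly into a generalized Pell equation, so that the conjunction of conditions (1) and (2) is exactly one of the simultaneous Pell systems already settled in Theorem \ref{Hardest}. Accordingly, I would split condition (1) into its two sub-cases and handle each by quoting Theorem \ref{Hardest}.

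First, suppose $2b^2+1$ is a perfect square, say $2b^2+1 = x^2$, so that $x^2 - 2b^2 = 1$. Condition (2) says $3b^2+1 = y^2$, i.e. $y^2 - 3b^2 = 1$. Then $(x,y,b)$ is a positive integer solution of the system with $(\lambda,\mu) = (1,1)$, which Theorem \ref{Hardest} rules out entirely; hence this sub-case is impossible for every $b \geq 1$. Next, suppose instead that $2b^2+2$ is a perfect square, say $2b^2+2 = x^2$, so that $x^2 - 2b^2 = 2$. Together with $y^2 - 3b^2 = 1$ this is the system with $(\lambda,\mu) = (2,1)$, whose only positive integer solution, by Theorem \ref{Hardest}, is $(x,y,b) = (2,2,1)$. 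This forces $b = 1$, so no $b \geq 2$ can satisfy both conditions in this sub-case. Combining the two sub-cases gives the claim.

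I do not expect any genuine obstacle: all the real difficulty has been packed into Theorem \ref{Hardest} (via Gloden's theorem and the Katayama--Levesque--Nakahara result). The only points requiring a word of care are bookkeeping ones, namely checking that the exceptional solution $(2,2,1)$ has $b=1$ and is therefore excluded by the hypothesis $b \geq 2$, and noting that in the $(1,1)$ case the hypothesis $b \geq 2$ is not even needed since the system has no positive solution at all.
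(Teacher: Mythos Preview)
Your proposal is correct and follows exactly the approach the paper intends: reducing the two sub-cases of condition (1) to the simultaneous Pell systems $(\lambda,\mu)=(1,1)$ and $(\lambda,\mu)=(2,1)$ of Theorem~\ref{Hardest}, and noting that the lone solution $b=1$ in the second system is excluded by the hypothesis $b\geq 2$.
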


\begin{lemma}
\label{L2}
There exists no $b \geq 4$ such that both of the following statements hold:
\begin{enumerate}
\item Either $2b^2-1$ is a perfect square or $2b^2-2$ is a perfect square.
\item Either $3b^2-2$ is a perfect square or $3b^2-3$ is a perfect square.
\end{enumerate}

\end{lemma}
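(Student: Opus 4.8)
The plan is to argue by contradiction: suppose some $b \geq 4$ satisfies both conditions, translate each condition into a generalized Pell equation, and then appeal to Theorem \ref{Hardest}. First I would note that since $b \geq 4$, each of $2b^2-1$, $2b^2-2$, $3b^2-2$, $3b^2-3$ is at least $30$; hence whenever one of them is a perfect square, it is the square of a \emph{positive} integer. This lets me apply Theorem \ref{Hardest}, whose conclusions concern positive integer solutions, without having to treat separately any degenerate solutions having a zero coordinate.

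Write condition (1) as $x^2 - 2b^2 = \lambda$ with $\lambda \in \{-1,-2\}$ (depending on whether $2b^2-1$ or $2b^2-2$ is the square), and condition (2) as $y^2 - 3b^2 = \mu$ with $\mu \in \{-2,-3\}$. This produces four cases $(\lambda,\mu)$. For $(\lambda,\mu) = (-1,-2)$ and $(\lambda,\mu) = (-1,-3)$, Theorem \ref{Hardest} asserts there are no positive integer solutions, so these cases cannot occur. For $(\lambda,\mu) = (-2,-2)$, Theorem \ref{Hardest} gives that the only positive integer solution is $(x,y,b) = (4,5,3)$, whose $b$-value is $3 < 4$; contradiction.

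The remaining case $(\lambda,\mu) = (-2,-3)$ is not covered by Theorem \ref{Hardest}, and I expect handling it to be the only (mild) obstacle. Here I would proceed directly: from $x^2 - 2b^2 = -2$ the integer $x$ is even, so writing $x = 2w$ and dividing by $2$ gives $b^2 - 2w^2 = 1$; from $y^2 - 3b^2 = -3$ the integer $y$ is divisible by $3$, so writing $y = 3v$ and dividing by $3$ gives $b^2 - 3v^2 = 1$. Subtracting the two relations yields $2w^2 = 3v^2$, which has no solution in positive integers (since $3 \mid 2w^2$ forces $3 \mid w$, and then $w = 3t$ gives $v^2 = 6t^2$, contradicting the irrationality of $\sqrt{6}$). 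Hence $w = v = 0$, so $b = 1 < 4$, a contradiction. Assembling the four cases proves the lemma.
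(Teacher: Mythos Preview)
Your proof is correct and follows the same approach the paper intends: reduce each pair of conditions to a simultaneous system $x^2-2b^2=\lambda$, $y^2-3b^2=\mu$ and invoke Theorem~\ref{Hardest}. The paper merely states that Lemma~\ref{L2} is ``easy to prove'' using Theorem~\ref{Hardest} and gives no details, so your write-up is in fact more complete than the paper's. In particular, you correctly notice that the case $(\lambda,\mu)=(-2,-3)$ is not among the pairs listed in Theorem~\ref{Hardest}; your elementary disposal of that case (reduce to $b^2-2w^2=1$ and $b^2-3v^2=1$, subtract to get $2w^2=3v^2$, and use irrationality of $\sqrt{6}$) is clean and fills a gap the paper leaves implicit. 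One small stylistic point: once you have established $x,y>0$ (and hence $w,v>0$), the contradiction is immediate from the non-existence of positive solutions to $2w^2=3v^2$; the subsequent clause ``hence $w=v=0$, so $b=1$'' is unnecessary, though harmless.
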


\begin{corollary}
\label{L4}
There exists no $v \geq 4$ such that $2v^2-2$ and $3v^2-2$ are both perfect squares.
\end{corollary}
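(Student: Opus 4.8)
The plan is to recognize this statement as an immediate consequence of Theorem \ref{Hardest} (and, equivalently, of Lemma \ref{L2}). Suppose for contradiction that some $v \geq 4$ has both $2v^2-2$ and $3v^2-2$ equal to perfect squares, say $2v^2-2 = x^2$ and $3v^2-2 = y^2$ with $x,y$ positive integers. Then $(x,y,v)$ is a positive integer solution of the simultaneous system $x^2-2v^2 = -2$ and $y^2-3v^2 = -2$, which is exactly the case $(\lambda,\mu) = (-2,-2)$ of Theorem \ref{Hardest}. By that theorem, the only positive integer solution in this case is $(x,y,b) = (4,5,3)$, which forces $v = 3$, contradicting $v \geq 4$.

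Alternatively, one can simply cite Lemma \ref{L2}: taking $b = v \geq 4$, the hypothesis ``$2v^2-2$ a perfect square'' is the second clause of condition (1) of that lemma, and ``$3v^2-2$ a perfect square'' is the first clause of condition (2), so both conditions of Lemma \ref{L2} would hold, which is impossible. Either route gives the result in one line.

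I expect there to be no real obstacle here, since all the substantive work — ruling out the relevant pair of Pell equations and isolating the exceptional solution $(4,5,3)$ — has already been done inside the proof of Theorem \ref{Hardest} (reducing $x^2-2b^2=-2$ via $x=2w$ to $b^2-2w^2=1$, combining with $y^2-3b^2=-2$ to get $y^2-6w^2=1$, and then applying Anglin's uniqueness bound for pairs $x^2-az^2=1$, $y^2-bz^2=1$ with $\max(a,b)\le 200$). The corollary is stated separately only because the precise shape ``$2v^2-2$ and $3v^2-2$ both perfect squares'' is the form in which the obstruction will be invoked when eliminating cases in the proofs of Conjecture \ref{One} and Theorem \ref{Three}.
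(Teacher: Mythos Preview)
Your proposal is correct and matches the paper's approach: the paper does not give a separate proof but positions Corollary \ref{L4} immediately after Lemma \ref{L2}, both flagged as easy consequences of Theorem \ref{Hardest}, exactly as you describe. Your identification of the case $(\lambda,\mu)=(-2,-2)$ with unique solution $(x,y,b)=(4,5,3)$ is precisely the intended argument.
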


\begin{lemma}
\label{L3}
There exists no $v \geq 2$ such that both of the following statements hold:
\begin{enumerate}
\item $2v^2+2$ is a perfect square.
\item Either $3v^2+1$ is a perfect square or $3v^2+6$ is a perfect square.
\end{enumerate}

\end{lemma}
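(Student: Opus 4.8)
The plan is to reduce both cases directly to Theorem~\ref{Hardest} by taking $b=v$. Suppose first that $2v^2+2$ and $3v^2+1$ are both perfect squares, say $2v^2+2=x^2$ and $3v^2+1=y^2$ for positive integers $x,y$. Then $(x,y,v)$ is a positive integer solution of the system
\begin{align*}
x^2-2b^2 &= 2,\\
y^2-3b^2 &= 1,
\end{align*}
which is the case $(\lambda,\mu)=(2,1)$ of Theorem~\ref{Hardest}. By that theorem the only positive integer solution is $(x,y,b)=(2,2,1)$, forcing $v=1$, contrary to the hypothesis $v\geq 2$.

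Next suppose $2v^2+2$ and $3v^2+6$ are both perfect squares, say $2v^2+2=x^2$ and $3v^2+6=y^2$. Then $(x,y,v)$ solves
\begin{align*}
x^2-2b^2 &= 2,\\
y^2-3b^2 &= 6,
\end{align*}
i.e.\ the case $(\lambda,\mu)=(2,6)$ of Theorem~\ref{Hardest}, whose only positive integer solution is $(x,y,b)=(2,3,1)$; again this gives $v=1$, contradicting $v\geq 2$. Since these are the only two ways condition (2) can hold, no $v\geq 2$ satisfies both (1) and (2), which is the claim.

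As for where the difficulty lies: essentially all of it has already been packed into Theorem~\ref{Hardest} — the pair $(\lambda,\mu)=(2,6)$ needed Rickert's lemma together with a finite machine verification, and $(\lambda,\mu)=(2,1)$ rested on the work of Katayama, Levesque and Nakahara. Granting that theorem, the present lemma is pure bookkeeping; the only things to double-check are that the ``$+2$'' and ``$+6$'' shifts translate into exactly the $(\lambda,\mu)$ pairs listed (rather than, say, $(2,2)$), and that the excluded small solution genuinely has $v=1$ so that the bound $v\geq 2$ does the job.
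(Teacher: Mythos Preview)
Your proof is correct and follows exactly the approach the paper intends: the paper states that Lemma~\ref{L3} follows easily from Theorem~\ref{Hardest}, and your reduction to the cases $(\lambda,\mu)=(2,1)$ and $(\lambda,\mu)=(2,6)$ with $b=v$, together with the observation that the unique small solutions have $v=1<2$, is precisely that deduction.
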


We will prove Conjecture \ref{One} and Theorem \ref{Three} for $a \geq 50$ to avoid complications that arise in the cases when $a$ is small (as also indicated in the above lemmas). For $a < 50$, it is easy to verify by direct computation. In fact, as mentioned before, the authors themselves verified the conjectures for all $a$ upto $10^6$. Alternatively, it is easy to prove for small $a$ using the criteria mentioned in Corollary \ref{Cri3} and Corollary \ref{Cri4}. 

\section{Proof of Conjecture \ref{One}}

First suppose $a=b^2$ for some $b \in \mathbb{N}$. Since we are assuming throughout that $a \geq 50$, we have $b \geq 8$. Then we need to find the smallest $j$ which satisfies the condition in \eqref{Eqn1}. We need to consider several cases.

Case $1$: Suppose that $j$ is not of the form $ \lfloor \sqrt{la} \rfloor$ for $l \in \mathbb{N}$. Thus, there is some $h \geq 1$ such that $$ \left \lfloor \sqrt{(h-1)a} \right \rfloor + 1 \leq j < \left \lfloor \sqrt{ha} \right \rfloor. $$  Then, it is clear that $\lambda_j = h-1$. Further $j+1 \leq  \sqrt{ha}$, and thus $(j+1)^2 \leq ha$. Therefore, the condition in \eqref{Eqn1} does not hold in this case.

Case $2$: Suppose that $j$ is of the form $ \lfloor \sqrt{la} \rfloor$ for $l \in \mathbb{N}$. By Corollary \ref{Cri3}, to prove the first part of Conjecture \ref{One}, we need to show that the least such $l$ for which $j = \lfloor \sqrt{la} \rfloor$ satisfies the condition in \eqref{Eqn1}  is $2$ if  $b \not\in \bigcup_{n \geq 0} \{u_{4n+1}, u_{4n+2}\}$, and the least such $l$ is $3$ if $b \in \bigcup_{n \geq 0} \{u_{4n+1}, u_{4n+2}\}$. For this, we consider three subcases depending on when $l$ is $1,2$ or $3$. 

Case $2(i)$: Suppose $j = \lfloor \sqrt{a} \rfloor = \sqrt{a}$ (since $a$ is a perfect square). Then, it is clear that $\lambda_j = 1$. For \eqref{Eqn1} to be true, we must have $(\sqrt{a} + 1)^2 > 2a+2$, which is easily seen to be false for all $a \in \mathbb{N}$.  

Case $2(ii)$: Suppose $j = \lfloor \sqrt{2a} \rfloor $. Then, it is clear that $\lambda_j = 1$. For \eqref{Eqn1} to be true, we must have $\lfloor \sqrt{2a} \rfloor + 1 > \sqrt{2a+2}$, which is clearly seen to be true for all $a$, except when $2a+1$ or $2a+2$ are perfect squares. Since $a=b^2$, we need to classify the values of $b$ for which $2b^2+1$ or $2b^2+2$ is a perfect square. By Lemma \ref{P1}, it follows that $2b^2+1$ is a perfect square if and only if $b \in \bigcup_{n \geq 0} \{u_{4n+2}\}$. Let $2b^2+2$ be a perfect square, that is $2b^2 + 2 = u^2$ for some $u \in \mathbb{N}$. Since $u$ is even, $u = 2w$ for some $w \in \mathbb{N}$. Then, we have $b^2-2w^2 = -1$. Therefore, by Lemma \ref{P2}, it follows that $2b^2+2$ is a perfect square if and only if $b \in \bigcup_{n \geq 0} \{u_{4n+1}\}$. Thus, $j = \lfloor \sqrt{2a} \rfloor $ satisfies the condition in \eqref{Eqn1} if and only if $b \not\in \bigcup_{n \geq 0} \{u_{4n+1}, u_{4n+2}\}$. That is, for $b \not\in \bigcup_{n \geq 0} \{u_{4n+1}, u_{4n+2}\}$, $j_0 =  \lfloor \sqrt{2a} \rfloor $, and then by Corollary \ref{Cri3}, the square Frobenius number is equal to $(a-\lfloor \sqrt{2a} \rfloor)^2 = (a-\lfloor b\sqrt{2} \rfloor)^2$ as required.  For $b \in \bigcup_{n \geq 0} \{u_{4n+1}, u_{4n+2}\}$, we need to look further.

Case $2(iii)$: Suppose $j = \lfloor \sqrt{3a} \rfloor $. Then, it is clear that $\lambda_j = 2$. For \eqref{Eqn1} to be true, we must have $\lfloor \sqrt{3a} \rfloor + 1 > \sqrt{3a+3}$, which is clearly seen to be true for all $a$, except when $3a+1$, $3a+2$ or $3a+3$ is a perfect square. Note that $3a+2$ being $2$ mod $3$ can never be a perfect square. Moreover, for $3a+3 = 3b^2+3$ to be a perfect square, it must be divisible by $9$, and then $b^2+1$ must be divisible by $3$ which is impossible. Further, for $b \in \bigcup_{n \geq 0} \{u_{4n+1}, u_{4n+2}\}$, we already know that either $2b^2+1$ or $2b^2 + 2$ is a perfect square. In addition, if $3b^2+1$ is a perfect square, then we get a contradiction to Lemma \ref{L1}. Thus, for $b \in \bigcup_{n \geq 0} \{u_{4n+1}, u_{4n+2}\}$, $j = \lfloor \sqrt{3a} \rfloor $ satisfies \eqref{Eqn1}. 

This completes the proof of first part of Conjecture \ref{One}. For the second part of Conjecture \ref{One}, we follow a similar case analysis, but with different subtleties, and we provide all the details here for the sake of completeness. Suppose $a=b^2 - 1$ for some $b \in \mathbb{N}$. Since we are assuming throughout that $a \geq 50$, we have $b \geq 8$. Then we need to find the smallest $j$ which satisfies the condition in \eqref{Eqn1}. We need to consider several cases.

Case $1$: Suppose that there is some $h \in \{1,2,3\}$ such that $$ \left \lfloor \sqrt{(h-1)(a+1)} \right \rfloor + 1 \leq j < \left \lfloor \sqrt{h(a+1)} \right \rfloor. $$  Then, it is clear that $  \frac{j^2}{a} > h-1$. Therefore, $\lambda_j \geq h-1$, and we have $$ (j+1)^2 \leq h(a+1) \leq (\lambda_j + 1)(a+1), $$ and thus the condition in \eqref{Eqn1} is not satisfied in this case. 

Case $2$: Suppose $j = \lfloor \sqrt{a+1} \rfloor = b$. Then, $\lambda_j=1$ and the condition in \eqref{Eqn1} becomes $$ (b+1)^2 > 2b^2, $$ which is easily seen to be incorrect whenever $b \geq 3$. 

Case $3$: Suppose $j =  \lfloor \sqrt{2(a+1)} \rfloor$. Then, $\lambda_j = 2$ if and only if $2a$, $2a+1$ or $2a+2$ is a perfect square. Otherwise,  $\lambda_j = 1$. Note that $a=b^2-1$ and thus the above condition means that either $2b^2-2$ or $2b^2-1$ is a perfect square (since $2b^2$ cannot be a perfect square). By Lemma \ref{P1}, it follows that $2b^2-2$ is a perfect square if and only if $b \in \bigcup_{n \geq 1} \{u_{4n-1}\}$. By Lemma \ref{P2}, it follows that $2b^2-1$ is a perfect square if and only if $b \in \bigcup_{n \geq 1} \{u_{4n}\}$ (since $b \neq 1$). By the above discussion, if $b \not\in \bigcup_{n \geq 1} \{u_{4n-1}, u_{4n}\}$, then $\lambda_j = 1$ and since $$ \left(\lfloor \sqrt{2(a+1)} \rfloor + 1 \right)^2 > 2(a+1), $$ it follows that \eqref{Eqn1} is satisfied for $j =  \lfloor \sqrt{2(a+1)} \rfloor$. For $b \in \bigcup_{n \geq 1} \{u_{4n-1}, u_{4n}\}$, we have $\lambda_j = 2$ and thus \eqref{Eqn1} becomes $$ \left(\lfloor \sqrt{2(a+1)} \rfloor + 1 \right)^2 > 3(a+1), $$ which is easily seen to be incorrect whenever $a \geq 9$. Thus for $b \in \bigcup_{n \geq 1} \{u_{4n-1}, u_{4n}\}$, \eqref{Eqn1} is not satisfied for $j =  \lfloor \sqrt{2(a+1)} \rfloor$, and we need to look further.

Case $4$: Suppose $j =  \lfloor \sqrt{3(a+1)} \rfloor$. Then, $\lambda_j = 3$ if and only if $3a$, $3a+1$, $3a+2$ or $3a+3$ is a perfect square. Otherwise,  $\lambda_j = 2$. Note that $a=b^2-1$ and thus the above condition becomes that $3b^2-3$ or $3b^2-2$ is a perfect square (since $3b^2$ and $3b^2-1$ cannot be perfect squares). For $b \in \bigcup_{n \geq 1} \{u_{4n-1}, u_{4n}\}$, we already have that either $2b^2-2$ or $2b^2-1$ is a perfect square. This contradicts Lemma \ref{L2}. Thus, $\lambda_j = 2$ and since $$ \left(\lfloor \sqrt{3(a+1)} \rfloor + 1 \right)^2 > 3(a+1), $$ it follows that \eqref{Eqn1} is satisfied for $j =  \lfloor \sqrt{3(a+1)} \rfloor$. 
This completes the second part of Conjecture \ref{One}.

\section{Proof of Theorem \ref{Three}}

First suppose $a=(2b+1)^2$ for some $b \in \mathbb{N}$. For brevity, let $v=2b+1$, so that $a=v^2$. Since we are assuming throughout that $a \geq 50$, we have $v \geq 9$. We use Corollary \ref{Cri4} to prove Theorem \ref{Three}. We consider several cases.

Case $1$: Suppose that for some $h \in \{1,2,3\}$, $$\left \lfloor \sqrt{(h-1)a} \right \rfloor + 1 \leq j <  \left \lfloor \sqrt{ha} \right \rfloor  - 1. $$ Then, $\lambda_j = h-1$. Also, $j + 2 \leq \left \lfloor \sqrt{ha} \right \rfloor$. Therefore, $$ (j+2)^2 \leq ha < (\lambda_j + 1)(a+2). $$ Thus, the criterion in Corollary \ref{Cri4} does not hold irrespective of whether $j^2$ mod $a$ is odd or even. 

Next, we consider the case when $j$ is of the form $\left \lfloor \sqrt{ha} \right \rfloor - 1$ or  $\left \lfloor \sqrt{ha} \right \rfloor$ for $h \in \{1,2,3\}$. 

Case $2$: $j = \left \lfloor \sqrt{a} \right \rfloor - 1 = \sqrt{a} - 1$. Then, $\lambda_j = 0$. Since $a$ is odd, $j$ is even, and $j^2$ mod $a$ is equal to $j^2$ which is even. Then for the criterion in Corollary \ref{Cri4} to hold, it must be true that $$ (\sqrt{a} + 1)^2 > 2(a+2), $$ which is easily seen to be false for all $a \in \mathbb{N}$.

Case $3$: $j = \left \lfloor \sqrt{a} \right \rfloor  = \sqrt{a} $. Then, $\lambda_j = 1$. Moreover $j^2$ mod $a$ is equal to $0$ which is even. Then for the criterion in Corollary \ref{Cri4} to hold, it must be true that $$ (\sqrt{a} + 2)^2 > 3(a+2), $$ which is easily seen to be false for all $a \in \mathbb{N}$.

Next, we consider whether the criterion in Corollary \ref{Cri4} holds for  $j = \left \lfloor \sqrt{2a} \right \rfloor - 1$ or $j = \left \lfloor \sqrt{2a} \right \rfloor $. We will need two cases based on whether $\left \lfloor \sqrt{2a} \right \rfloor $ is odd or even. Prior to that, we need to choose two cases based on whether $v = (2b+1) \in \bigcup_{n \geq 1} \{u_{4n+1}\}$ or not. 

Case $4$: Suppose $v = (2b+1) \not\in \bigcup_{n \geq 1} \{u_{4n+1}\}$.

Case $4(i)$: Suppose $\left \lfloor \sqrt{2a} \right \rfloor $ is even. 

Case $4(i)(a)$: Suppose $j =  \left \lfloor \sqrt{2a} \right \rfloor - 1$. Note that $j$ is odd, $\lambda_j = 1$, and $j^2$ mod $a$ $= j^2 - a$ is even. Then the criterion in Corollary \ref{Cri4} becomes $$ \left(\left \lfloor \sqrt{2a} \right \rfloor + 1\right)^2 > 3(a+2), $$ which is easily seen to be false for all $a \in \mathbb{N}$.

Case $4(i)(b)$: Suppose $j =  \left \lfloor \sqrt{2a} \right \rfloor $. Note that $j$ is even, $\lambda_j = 1$, and $j^2$ mod $a$ $= j^2 - a$ is odd. Then the criterion in Corollary \ref{Cri4} becomes $$ \left(\left \lfloor \sqrt{2a} \right \rfloor + 2\right)^2 > 2(a+2), $$ which is easily soon to be true for all $a \in \mathbb{N}$. 

Thus, to summarize Case $4(i)$, the quantity $j_1$ in Corollary \ref{Cri4} is equal to $\left \lfloor \sqrt{2a} \right \rfloor$. 

Case $4(ii)$: Suppose $\left \lfloor \sqrt{2a} \right \rfloor $ is odd. 

Case $4(ii)(a)$: Suppose $j =  \left \lfloor \sqrt{2a} \right \rfloor - 1$. Note that $j$ is even, $\lambda_j = 1$, and $j^2$ mod $a$ $= j^2 - a$ is odd. Then the criterion in Corollary \ref{Cri4} becomes $$ \left(\left \lfloor \sqrt{2a} \right \rfloor + 1\right)^2 > 2(a+2). $$ That is, 

\begin{equation}
\label{Best}
\left \lfloor \sqrt{2a} \right \rfloor + 1 > \sqrt{2a+4}.
\end{equation} 

This is true unless one of $2a+1$, $2a+2$, $2a+3$ and $2a+4$ is a perfect square. 

Note that $a$ is an odd square; thus $a \equiv 1$ mod $4$, and therefore $2a+1$, $2a+2$, $2a+3$ and $2a+4$ are $3,4,5$ and $6$ mod $8$ respectively. Since a perfect square is either $0$, $1$ or $4$ modulo $8$, we get that $2a+1$, $2a+3$ and $2a+4$ cannot be perfect squares. Finally, note that by Lemma \ref{P2}, $2a+2 = 2v^2 + 2$ is a perfect square if and only if $v = 2b+1 \in \bigcup_{n \geq 1} \{u_{4n+1}\}$. Since, in Case $4$, we have $v = (2b+1) \not\in \bigcup_{n \geq 1} \{u_{4n+1}\}$, $2a+2$ is not a perfect square as well. Hence \eqref{Best} is true and we get $j_1 = \left \lfloor \sqrt{2a} \right \rfloor - 1$.

To conclude Case $4$, if $\left \lfloor \sqrt{2a} \right \rfloor $ is even, then $j_1 = \left \lfloor \sqrt{2a} \right \rfloor $, and if $\left \lfloor \sqrt{2a} \right \rfloor $ is odd, then $j_1 = \left \lfloor \sqrt{2a} \right \rfloor - 1 $. We can combine these cases together to say that in Case $4$, $$j_1 = 2 \left \lfloor \frac{\sqrt{2a}}{2} \right \rfloor = 2 \left \lfloor \frac{(2b+1)\sqrt{2}}{2} \right \rfloor, $$ as required.

Case $5$: Suppose $v = (2b+1) \in \bigcup_{n \geq 1} \{u_{4n+1}\}$. Note that by Lemma \ref{P2}, $2a+2 = 2v^2 + 2$ is a perfect square. Then, $$ \left \lfloor \sqrt{2a} \right \rfloor = \sqrt{2a+2} - 1 $$ is an odd number, and then using the analysis of Case $4(ii)$ above, it follows that $j = \left \lfloor \sqrt{2a} \right \rfloor - 1$ does not satisfy the criterion in Corollary \ref{Cri4} (because $2a+2$ is a perfect square in this case and thus \eqref{Best} does not hold). Next, suppose $j = \left \lfloor \sqrt{2a} \right \rfloor$. Note that $j$ is odd, $\lambda_j = 1$, and $j^2$ mod $a$ $= j^2 - a$ is even. Then the criterion in Corollary \ref{Cri4} becomes $$ \left(\left \lfloor \sqrt{2a} \right \rfloor + 2\right)^2 > 3(a+2), $$ which is easily seen to be false whenever $a \geq 28$. Therefore, in this case we need to further consider the values $\left \lfloor \sqrt{3a} \right \rfloor - 1$ and $\left \lfloor \sqrt{3a} \right \rfloor$ for $j$.

Case $5(i)$: Suppose $\left \lfloor \sqrt{3a} \right \rfloor $ is even. 

Case $5(i)(a)$: Suppose $j =  \left \lfloor \sqrt{3a} \right \rfloor - 1$. Note that $j$ is odd, $\lambda_j = 2$, and $j^2$ mod $a$ $= j^2 - 2a$ is odd. Then the criterion in Corollary \ref{Cri4} becomes $$ \left(\left \lfloor \sqrt{3a} \right \rfloor + 1\right)^2 > 3(a+2), $$ which is always true unless one of $3a+1$, $3a+2$, $3a+3$, $3a+4$, $3a+5$ or $3a+6$ is a perfect square. Note that $3a+2$ and $3a+5$ are $2$ modulo $3$ and thus cannot be perfect squares. Also, since $a$ is an odd square, we have $a \equiv 1$ (mod $4$), and thus $3a+3$ and $3a+4$ being $2$ and $3$ modulo $4$ cannot be perfect squares. Hence, we only need to consider if $3a+1 = 3v^2+1$ and $3a+6 = 3v^2+6$ are perfect squares or not. Note that we already have that $2a+2 = 2v^2+2$ is a perfect square. Then, additionally having $3v^2+1$ or $3v^2+6$ as a perfect square contradicts Lemma \ref{L3}.

Thus, when $\left \lfloor \sqrt{3a} \right \rfloor $ is even, then $j_1 = \left \lfloor \sqrt{3a} \right \rfloor - 1$. 

Case $5(ii)$: Suppose $\left \lfloor \sqrt{3a} \right \rfloor $ is odd. 

Case $5(ii)(a)$: Suppose $j =  \left \lfloor \sqrt{3a} \right \rfloor - 1$. Note that $j$ is even, $\lambda_j = 2$, and $j^2$ mod $a$ $= j^2 - 2a$ is even. Then the criterion in Corollary \ref{Cri4} becomes $$ \left(\left \lfloor \sqrt{3a} \right \rfloor + 1\right)^2 > 4(a+2), $$ which is easily seen to be false for all $a \in \mathbb{N}$.

Case $5(ii)(b)$: Suppose $j =  \left \lfloor \sqrt{3a} \right \rfloor $. Note that $j$ is odd, $\lambda_j = 2$, and $j^2$ mod $a$ $= j^2 - 2a$ is odd. Then the criterion in Corollary \ref{Cri4} becomes $$ \left(\left \lfloor \sqrt{3a} \right \rfloor + 2\right)^2 > 3(a+2). $$ Note that $$  \left(\left \lfloor \sqrt{3a} \right \rfloor + 2\right)^2 > (\sqrt{3a}+1)^2 = 3a+1+2\sqrt{3a} > 3a+6, $$ whenever $a \geq 3$. Thus $j_1 = \left \lfloor \sqrt{3a} \right \rfloor $ in this case.

To conclude Case $5$, if $\left \lfloor \sqrt{3a} \right \rfloor $ is even, then $j_1 = \left \lfloor \sqrt{3a} \right \rfloor - 1 $ and if $\left \lfloor \sqrt{3a} \right \rfloor $ is odd, then $j_1 = \left \lfloor \sqrt{3a} \right \rfloor $. We can combine these together to conclude that $$ j_1 = 2 \left \lfloor \frac{\left \lfloor \sqrt{3a} \right \rfloor - 1}{2} \right \rfloor + 1. $$ Finally, it is easy to observe that for any $x \in \mathbb{R}$,
\begin{equation}
\label{Last}
 \left \lfloor \frac{\left \lfloor x \right \rfloor - 1}{2} \right \rfloor = \left \lfloor \frac{x-1}{2} \right \rfloor. 
 \end{equation}
  Thus, we have in Case $5$, $$ j_1 = 2 \left \lfloor \frac{\sqrt{3a}-1}{2} \right \rfloor + 1 = 2 \left \lfloor \frac{(2b+1)\sqrt{3}-1}{2} \right \rfloor + 1. $$ This completes the proof of first part of Theorem \ref{Three}.

For the second part of Theorem \ref{Three}, suppose $a=(2b+1)^2 - 2$ for some $b \in \mathbb{N}$. For brevity, again let $v=2b+1$, so that $a=v^2 - 2$. Since we are assuming throughout that $a \geq 50$, we have $v \geq 9$. The main ideas in the proof of the second part are similar to those in the proof of the first part. However, there are several complications and we provide all the details here for the sake of completeness. 

Case $1$: Suppose that for some $h \in \{1,2,3\}$, $$\left \lfloor \sqrt{(h-1)(a+2)} \right \rfloor + 1 \leq j <  \left \lfloor \sqrt{h(a+2)} \right \rfloor  - 1. $$ Then, $\lambda_j \geq h-1$. Also, $j + 2 \leq \left \lfloor \sqrt{h(a+2)} \right \rfloor$. Therefore, $$ (j+2)^2 \leq h(a+2) \leq (\lambda_j + 1)(a+2). $$ Thus, the criterion in Corollary \ref{Cri4} does not hold irrespective of whether $j^2$ mod $a$ is odd or even. 

Next, we consider the case when $j$ is of the form $\left \lfloor \sqrt{h(a+2)} \right \rfloor - 1$ or  $\left \lfloor \sqrt{h(a+2)} \right \rfloor$ for $h \in \{1,2,3\}$. 

Case $2$: $j = \left \lfloor \sqrt{a+2} \right \rfloor - 1 = \sqrt{a+2} - 1$. Then, $\lambda_j = 0$. Since $a$ is odd, $j$ is even, and $j^2$ mod $a$ is equal to $j^2$ which is even. Then for the criterion in Corollary \ref{Cri4} to hold, it must be true that $$ (\sqrt{a+2} + 1)^2 > 2(a+2), $$ which is easily seen to be false for all $a > 5$.

Case $3$: $j = \left \lfloor \sqrt{a+2} \right \rfloor  = \sqrt{a+2} $. Then, $\lambda_j = 1$. Moreover $j^2$ mod $a$ is equal to $2$ which is even. Then for the criterion in Corollary \ref{Cri4} to hold, it must be true that $$ (\sqrt{a+2} + 2)^2 > 3(a+2), $$ which is easily seen to be false for all $a \geq 6$.

Next, we consider whether the criterion in Corollary \ref{Cri4} holds for  $j = \left \lfloor \sqrt{2(a+2)} \right \rfloor - 1$ or $j = \left \lfloor \sqrt{2(a+2)} \right \rfloor $. We will need two cases based on whether $\left \lfloor \sqrt{2(a+2)} \right \rfloor $ is odd or even. Prior to that, we need to choose two cases based on whether $v = (2b+1) \in \bigcup_{n \geq 0} \{u_{4n+3}\}$ or not. 

Case $4$: Suppose $v = (2b+1) \not\in \bigcup_{n \geq 0} \{u_{4n+3}\}$.

Case $4(i)$: Suppose $\left \lfloor \sqrt{2(a+2)} \right \rfloor $ is even. 

Case $4(i)(a)$: Suppose $j =  \left \lfloor \sqrt{2(a+2)} \right \rfloor - 1$. Note that $j$ is odd, $\lambda_j = 1$, and $j^2$ mod $a$ $= j^2 - a$ is even. Then the criterion in Corollary \ref{Cri4} becomes $$ \left(\left \lfloor \sqrt{2(a+2)} \right \rfloor + 1\right)^2 > 3(a+2), $$ which is easily seen to be false for all $a \geq 8$.

Case $4(i)(b)$: Suppose $j =  \left \lfloor \sqrt{2(a+2)} \right \rfloor $. Note that $j$ is even, $\lambda_j $ is equal to $1$ or $2$. If $\lambda_j = 1$, then $j^2$ mod $a$ $= j^2 - a$ is odd. Then the criterion in Corollary \ref{Cri4} becomes $$ \left(\left \lfloor \sqrt{2(a+2)} \right \rfloor + 2\right)^2 > 2(a+2), $$ which is easily soon to be true for all $a \in \mathbb{N}$. If $\lambda_j = 2$, then $j^2$ mod $a$ $= j^2 - 2a$ is even. Then the criterion in Corollary \ref{Cri4} becomes $$ \left(\left \lfloor \sqrt{2(a+2)} \right \rfloor + 2\right)^2 > 4(a+2), $$ which is easily soon to be false for all $a \geq 10$. Therefore, in Case $4(i)$, $j_1 = \left \lfloor \sqrt{2(a+2)} \right \rfloor $ if and only if $\lambda_j = 1$. To see when this happens, note that $\lambda_j = 2$ if and only if one of $2a, 2a+1, 2a+2$ and $2a+3$ is a perfect square.

Note that $a + 2$ is an odd square; thus $a \equiv 3$ mod $4$, and therefore $2a$, $2a+1$, $2a+2$ and $2a+3$ are $6,7,0$ and $1$ mod $8$ respectively. Since a perfect square is either $0$, $1$ or $4$ modulo $8$, we get that $2a$ and $2a+1$ cannot be perfect squares. Further, note that by Lemma \ref{P1}, $2a+2 = 2v^2 - 2$ is a perfect square if and only if $v = 2b+1 \in \bigcup_{n \geq 0} \{u_{4n+3}\}$. Since, in Case $4$, we have $v = (2b+1) \not\in \bigcup_{n \geq 0} \{u_{4n+3}\}$, thus $2a+2$ is not a perfect square as well. Finally, suppose $2a+3$ is a perfect square. Then, $$ \left \lfloor \sqrt{2(a+2)} \right \rfloor = \sqrt{2a+3}$$ is an odd number which contradicts the assumption of Case $4(i)$. Thus $2a+3$ is not a perfect square and we get $j_1 = \left \lfloor \sqrt{2(a+2)} \right \rfloor$ in Case $4(i)$.

Case $4(ii)$: Suppose $\left \lfloor \sqrt{2(a+2)} \right \rfloor $ is odd. 

Suppose that $j =  \left \lfloor \sqrt{2(a+2)} \right \rfloor - 1$. Note that $j$ is even, $\lambda_j = 1$, and $j^2$ mod $a$ $= j^2 - a$ is odd. Then the criterion in Corollary \ref{Cri4} becomes $$ \left(\left \lfloor \sqrt{2(a+2)} \right \rfloor + 1\right)^2 > 2(a+2), $$ which is easily seen to be true for all $a \in \mathbb{N}$. Thus $j_1 = \left \lfloor \sqrt{2(a+2)} \right \rfloor - 1$ in Case $4(ii)$.

To conclude Case $4$, if $\left \lfloor \sqrt{2(a+2)} \right \rfloor $ is even, then $j_1 = \left \lfloor \sqrt{2(a+2)} \right \rfloor $, and if $\left \lfloor \sqrt{2(a+2)} \right \rfloor $ is odd, then $j_1 = \left \lfloor \sqrt{2(a+2)} \right \rfloor - 1 $. We can combine these cases together to say that in Case $4$, $$j_1 = 2 \left \lfloor \frac{\sqrt{2(a+2)}}{2} \right \rfloor = 2 \left \lfloor \frac{(2b+1)\sqrt{2}}{2} \right \rfloor, $$ as required.

Case $5$: Suppose $v = (2b+1) \in \bigcup_{n \geq 0} \{u_{4n+3}\}$. First we investigate whether $\left \lfloor \sqrt{2(a+2)} \right \rfloor - 1 $ or $\left \lfloor \sqrt{2(a+2)} \right \rfloor $ satisfy the criterion of Corollary \ref{Cri4}. Since $v = (2b+1) \in \bigcup_{n \geq 0} \{u_{4n+3}\}$, $2a+2$ is a perfect square. Thus, $$ \left \lfloor \sqrt{2(a+2)} \right \rfloor  =  \sqrt{2a+2} $$ is an even number.

Case $5(i)$: Suppose $j =  \left \lfloor \sqrt{2(a+2)} \right \rfloor - 1 = \sqrt{2a+2} - 1$. Note that $j$ is odd, $\lambda_j = 1$, and $j^2$ mod $a$ $= j^2 - a$ is even. Then the criterion in Corollary \ref{Cri4} becomes $$ \left(\left \lfloor \sqrt{2(a+2)} \right \rfloor + 1\right)^2 > 3(a+2), $$ which is easily seen to be false for all $a \geq 8$.

Case $5(ii)$: Suppose $j =  \left \lfloor \sqrt{2(a+2)} \right \rfloor = \sqrt{2a+2} $. Note that $j$ is even, $\lambda_j = 2$, and $j^2$ mod $a$ $= j^2 - 2a$ is even. Then the criterion in Corollary \ref{Cri4} becomes $$ \left(\left \lfloor \sqrt{2(a+2)} \right \rfloor + 2\right)^2 > 4(a+2), $$ which is easily soon to be false for all $a \geq 10$. 

Therefore, in this case we need to further consider the values $\left \lfloor \sqrt{3(a+2)} \right \rfloor - 1$ and $\left \lfloor \sqrt{3(a+2)} \right \rfloor$ for $j$.

Case $5(iii)$: Suppose $\left \lfloor \sqrt{3(a+2)} \right \rfloor $ is even. 

Case $5(iii)(a)$: Suppose $j =  \left \lfloor \sqrt{3(a+2)} \right \rfloor - 1$. Note that $j$ is odd, $\lambda_j = 2$, and $j^2$ mod $a$ $= j^2 - 2a$ is odd. Then the criterion in Corollary \ref{Cri4} becomes $$ \left(\left \lfloor \sqrt{3(a+2)} \right \rfloor + 1\right)^2 > 3(a+2), $$ which is true for all $a \in \mathbb{N}$. Thus $j_1 = \left \lfloor \sqrt{3(a+2)} \right \rfloor - 1$ in Case $5(iii)$.

Case $5(iv)$: Suppose $\left \lfloor \sqrt{3(a+2)} \right \rfloor $ is odd. 

Case $5(iv)(a)$: Suppose $j =  \left \lfloor \sqrt{3(a+2)} \right \rfloor - 1$. Note that $j$ is even, $\lambda_j = 2$, and $j^2$ mod $a$ $= j^2 - 2a$ is even. Then the criterion in Corollary \ref{Cri4} becomes $$ \left(\left \lfloor \sqrt{3(a+2)} \right \rfloor + 1\right)^2 > 4(a+2), $$ which is easily seen to be false for all $a \geq 12$.

Case $5(iv)(b)$: Suppose $j =  \left \lfloor \sqrt{3(a+2)} \right \rfloor $. Note that $j$ is odd. Further, note that $\lambda_j = 2$ unless one of the numbers between $3a$ and $3a+6$ is a perfect square. Since $a=v^2-2$, we need to consider if one of $3v^2-6$, $3v^2-5$, $3v^2-4$, $3v^2-3$, $3v^2-2$, or $3v^2-1$ is a perfect square. Note that $v$ is an odd square and thus $v^2 \equiv 1$ (mod $8$). Then, we get that the numbers $3v^2-6$, $3v^2-5$, $3v^2-4$, and $3v^2-1$ are $5,6,7$ and $2$ modulo $8$ respectively and thus cannot be perfect squares. Further, $3v^2-3$ cannot be a perfect square since $2a+2 = 2v^2 - 2$ is already a perfect square. Finally, $2v^2-2$ and $3v^2-2$ cannot both be perfect squares by Corollary \ref{L4}. Thus, none of the numbers between $3v^2-6$ and $3v^2-1$ could be perfect squares and we get $\lambda_j = 2$. Then, $j^2$ mod $a$ $= j^2 - 2a$ is odd, and the criterion in Corollary \ref{Cri4} becomes $$ \left(\left \lfloor \sqrt{3(a+2)} \right \rfloor + 2\right)^2 > 3(a+2) $$ which is clearly true for all  $a \in \mathbb{N}$. Thus $j_1 = \left \lfloor \sqrt{3(a+2)} \right \rfloor $ in this case.

To conclude Case $5$, if $\left \lfloor \sqrt{3(a+2)} \right \rfloor $ is even, then $j_1 = \left \lfloor \sqrt{3(a+2)} \right \rfloor - 1 $ and if $\left \lfloor \sqrt{3(a+2)} \right \rfloor $ is odd, then $j_1 = \left \lfloor \sqrt{3(a+2)} \right \rfloor $. We can combine these together to conclude that $$ j_1 = 2 \left \lfloor \frac{\left \lfloor \sqrt{3(a+2)} \right \rfloor - 1}{2} \right \rfloor + 1. $$ Finally, from the observation in \eqref{Last}, we have $$ j_1 = 2 \left \lfloor \frac{\sqrt{3(a+2)}-1}{2} \right \rfloor + 1 = 2 \left \lfloor \frac{(2b+1)\sqrt{3}-1}{2} \right \rfloor + 1. $$ This completes the proof of the second part of Theorem \ref{Three}.


\begin{thebibliography}{50}

\bibitem{alfonsin}  J. L. Ram\'{\i}rez Alfons\'{\i}n \emph{The Diophantine Frobenius Problem}, volume 30 of Oxford Lecture Series in Mathematics and its Applications. Oxford University Press, Oxford, 2005.
\bibitem{MainPaper} J. Chappelon and J. L. R. Alfons\'{\i}n, The Square Frobenius Number, 2020, arXiv preprint arXiv:2006.14219.
\bibitem{Gloden} A. Gloden, Impossibilit\'es Diophantiennes, Euclides, Madrid, 9 (1949), 476.
\bibitem{Boutin} A. Boutin, and P. F. Teilhet, The form $6\beta^2 + 1$ is not a square if $\beta$ is a root of $\gamma^2 - 3 \beta^2 = 1$, L' Interm\'ediare des math\'ematiciens, XI (1904), 68, 182.
\bibitem{Kata} S. Katayama, Several methods for solving simultaneous Fermat-Pell equations, J. Math. Tokushima Univ., 33 (1999), 1--14.
\bibitem{Anglin} W. S. Anglin, Simultaneous Pell equations, Mathematics of computation, 65 (1996), 355–359.
\bibitem{Bennett} M. A. Bennett, On the number of solutions of simultaneous Pell equations, J. Reine Angew. Math., 498 (1998), 173--199.
\bibitem{Fermat} A. Van der Poorten, Fermat's Four Squares Theorem, 2007, arXiv preprint arXiv:0712.3850.
\bibitem{Katayama}  S. Katayama, C. Levesque and T. Nakahara, On the unit group and the class number of certain composita of two real quadratic field, Manuscripa Math., 105 (2001), 85--101.
\bibitem{Szalay} L. Szalay, On the resolution of simultaneous Pell equations, Annales Mathematicae et Informaticae, 34 (2007), 77--87.
\bibitem{Niven} I. Niven, H. S. Zuckerman and H. L. Montgomery, \emph{An Introduction to the Theory of Numbers}, John Wiley and Sons Inc, New York, 2004.
\bibitem{Conrad1} K. Conrad, Pell's Equation, I. Available at \\ https://kconrad.math.uconn.edu/blurbs/ugradnumthy/pelleqn1.pdf.
\bibitem{Conrad2} K. Conrad, Pell's Equation, II. Available at \\ https://kconrad.math.uconn.edu/blurbs/ugradnumthy/pelleqn2.pdf.

\end{thebibliography}
\end{document}